\theoremstyle{plain}
\newtheorem{theorem}{Theorem}[section]
\newtheorem{lemma}[theorem]{Lemma}
\theoremstyle{definition}
\numberwithin{equation}{section}
\title{
Continuous dependence of an invariant measure on the jump rate of a piecewise-deterministic Markov process}
\author[1]{Dawid Czapla}
\author[2]{Sander C. Hille}
\author[1]{Katarzyna Horbacz}
\author[1]{Hanna~Wojew\'odka-\'Sci\k{a}\.zko}
\affil[1]{\small Institute of Mathematics, University of Silesia in Katowice, Bankowa 14, 40-007 Katowice, Poland}
\affil[2]{\small Mathematical Institute, Leiden University, P.O. Box 9512, 2300 RA Leiden, The Netherlands}
\date{}
\begin{document}
\maketitle
\begin{abstract}
We investigate a piecewise-deterministic Markov process, evolving on a Polish metric space, whose deterministic behaviour between random jumps is governed by some semi-flow, and any state right after the jump is attained by a randomly selected continuous transformation. It is assumed that the jumps appear at random moments, which coincide with the jump times of a Poisson process with intensity $\lambda$. The model of this type, although in a more general version, was examined in our previous papers, where we have shown, among others, that the Markov process under consideration possesses a unique invariant probability measure, say $\nu_{\lambda}^*$. 
The aim of this paper is to prove that the map $\lambda\mapsto\nu_{\lambda}^*$ is continuous (in the topology of weak convergence of probability measures). The studied dynamical system is inspired by certain stochastic models for cell division and gene expression.
\end{abstract}
{\small \noindent
{\bf Keywords:} invariant measure, piecewise-deterministic Markov process, random dynamical system, jump rate, continuous dependence
}\\
{\bf 2010 AMS Subject Classification:} 60J05, 60J25, 37A30, 37A25\\

\section*{Introduction}
Piecewise-deterministic Markov processes (PDMPs) originate with M.H.A. Davis \cite{davis}. They constitute an important class of Markov processes that is complementary to those defined by stochastic differential equations. PDMPs are encountered as suitable mathematical models for processes in the physical world around us, e.g. in resource allocation and service provisioning (queing, cf. \cite{davis}) or biology: as stochastic models for gene expression \cite{tyran}, cell division \cite{lm}, gene regulation \cite{hhs}, excitable membranes \cite{riedler_ea} or population dynamics \cite{alkurdi}. 

Mathematical research on PDMPs has been conducted over the years in various directions. Applications in control and optimization have been just one direction. The fundamentals of existence and uniqueness of invariant probability measures for Markov operators and semigroups associated to PDMPs, as well as their asymptotic properties, have attracted much attention. See e.g. \cite{costa2000,costa}, where the considered underlying state space is locally compact. The theory for the general case of non-locally compact Polish state space is less developed yet. It is considered e.g. in \cite{hhs,riedler_ea,dawid,asia,hw}. Another direction is that of establishing the validity of the Strong Law of Large Numbers (SLLN), the Central Limit Theorem (CLT) and the Law of the Interated Logarithm (LIL) for these non-stationary Markov processes (cf. \cite{hhsw,hhsw2,clt_chw,lil_chw}), which has interest in itself for non-stationary processes in general \cite{klo}.

In this paper, we are concerned with a special case of the PDMP described in \cite{dawid,asia}, whose deterministic motion between jumps depends on a~single continuous semi-flow, and any post-jump location is attained by a continuous transformation of the pre-jump state, randomly selected (with a place-dependent probability) among all possible ones. The jumps in this model occur at random time points according to a homogeneous Poisson process.
 The random dynamical system of this type constitutes a mathematical framework for certain particular biological models, such as those for gene expression \cite{tyran} or cell division \cite{lm}. 

The aim of the paper is to establish the continuous (in the Fortet-Mourier distance, cf. \cite[Section 8.3]{bogachev}) dependence of the invariant measure on the rate of the Poisson process determining the frequency of jumps. While the SLLN and the CLT provide the theoretical foundation for successful approximation of the invariant measure by means of observing or simulating (many) sample trajectories of the process, this result asserts the stability of this procedure, at least locally in parameter space. It is a~prerequisite for the development of a bifurcation theory. Moreover, even stronger regularity of this dependence on parameter (i.e. differentiability in a suitable norm on the space of measures) would be needed for applications in control theory or for parameter estimation (see e.g. \cite{hille_lyczek}).

The outline of the paper is as follows. In Section \ref{sec:1}, several facts on integrating measure-valued functions and basic definitions from the theory of Markov operators have been compiled. Section \ref{sec:model} deals with the structure and assumptions of the model under study. In Section \ref{sec:properties}, we establish certain auxiliary results on the transition operator of the Markov chain given by the post-jump locations. More specifically, we show that the operator is jointly continuous (in the topology of weak convergence of measures) as a function of measure and the jump-rate parameter, and that 
the weak convergence of the distributions of the chain to its unique stationary distribution must be uniform. Section \ref{sec:main} is the essential part of the paper. Here, we establish the announced results on the continuous dependence of the invariant measure on the jump frequency for both, the discrete-time system, constituted by the post jump-locations, and for the PDMP itself.

\section{Prelimenaries}\label{sec:1}
Let $X$ be a closed subset of some separable Banach space $(H,\|\cdot\|)$, endowed with the \hbox{$\sigma$-field} $\mathcal{B}_X$ consisting of its Borel subsets. Further, let $(BM(X),\|\cdot\|_{\infty})$ stand for the Banach space of all bounded Borel-measurable functions $f:X\to\mathbb{R}$ with the supremum norm $\|f\|_{\infty}:=\sup_{x\in X} |f(x)|.$ By $BC(X)$ and $BL(X)$ we shall denote the subspaces of $BM(X)$ consisting of all continuous and all Lipschitz continuous functions, respectively. Let us further introduce
\begin{align*}
\|f\|_{BL}:=\max\left\{\|f\|_{\infty}, |f|_{Lip}\right\}\;\;\;\text{for any}\;\;\;f\in BL(X),
\end{align*}
where 
\begin{align*}
|f|_{Lip}:=\sup\left\{\frac{|f(x)-f(y)|}{\|x-y\|}:\;x,y\in X,\,x\neq y  \right\}.
\end{align*}
It is well-known (cf. \cite[Proposition 1.6.2]{weaver}) that $\|\cdot\|_{BL}$ defines a norm in $BL(X)$, for which it is a~Banach space. 

In what follows, we will write $(\mathcal{M}_{sig}(X), \|\cdot\|_{TV})$ for the Banach space of all finite, countably additive functions (signed measures) on $\mathcal{B}_X$, endowed with the total variation norm $\|\cdot\|_{TV}$, which can be expressed~as
\begin{align*}
\|\mu\|_{TV}:=|\mu|(X)=\sup\left\{\left|\left\langle f,\mu\right\rangle\right| :\; f\in BM(X),\,\|f\|_{\infty}\leq 1\right\}\;\;\;\text{for}\;\;\;\mu\in\mathcal{M}_{sig}(X), 
\end{align*}
where
\begin{align*}
\left\langle f,\mu\right\rangle:=\int_X f(x)\mu(dx)
\end{align*}
and $|\mu|$ stands for the absolute variation of $\mu$. The symbols $\mathcal{M}_+(X)$ and $\mathcal{M}_1(X)$ will be used to denote the subsets of $\mathcal{M}_{sig}(X)$, consisting of all non-negative and all probability measures on $\mathcal{B}_X$, respectively. Moreover, we will write $\mathcal{M}_{1,1}(X)$ for the set of all measures $\mu\in\mathcal{M}_1(X)$ with finite first moment, i.e. satisfying $\langle\|\cdot\|,\mu\rangle<\infty$.

Let us now define, for any $\mu\in\mathcal{M}_{sig}(X)$, the linear functional $I_{\mu}:BL(X)\to \mathbb{R}$ given by 
\begin{align*}
I_{\mu}(f)=\langle f,\mu\rangle\;\;\;\text{for}\;\;\;f\in BL(X).
\end{align*}
It easy to show that $I_{\mu}\in BL(X)^*$ for every $\mu\in\mathcal{M}_{sig}(X)$, where $BL(X)^*$ stands for the dual space of $(BL(X), \|\cdot\|_{BL})$ with the operator norm $\|\cdot\|_{BL}^*$ given by
\begin{align*}
\|\varphi\|_{BL}^*:=\sup\left\{|\varphi(f)|:\; f\in BL(X),\,\|f\|_{BL}\leq 1\right\}\;\;\;\text{for any}\;\;\; \varphi\in BL(X)^*.
\end{align*}
Moreover, we have $\|I_{\mu}\|_{BL}^*\leq\|\mu\|_{TV}$ for any $\mu\in\mathcal{M}_{sig}(X)$.

Furthermore, it is well known (see \cite[Lemma~6]{dudley_baire}), that the mapping 
$$\mathcal{M}_{sig}(X)\ni\mu \mapsto I_{\mu}\in BL(X)^*$$ 
is injective, and thus the space $(\mathcal{M}_{sig}(X), \|\cdot\|_{TV})$ may be embedded into $(BL(X)^*, \|\cdot\|_{BL}^*)$. This enables us to identify each measure $\mu\in \mathcal{M}_{sig}(X)$ with the functional $I_{\mu}\in BL(X)^*$. Note that $\|\cdot\|_{BL}^*$ induces a norm on $\mathcal{M}_{sig}(X)$, called the Fortet-Mourier (or bounded Lipschitz) norm and denoted by $\|\cdot\|_{FM}$. Consequently, we can write
\begin{align*}
\|\mu\|_{FM}:=\left\|I_{\mu}\right\|_{BL}^*=\sup\{|\langle f,\mu\rangle|:\,f\in BL(X),\,\|f\|_{BL}\leq 1\}\;\;\;\text{for any}\;\;\; \mu\in\mathcal{M}_{sig}(X).
\end{align*}
As we have already seen, generally $\|\mu\|_{FM}=\|I_{\mu}\|_{BL}^* \leq \|\mu\|_{TV}$ for any $\mu\in\mathcal{M}_{sig}(X)$. However, for positive measures the norms coincide, i.e. $\|\mu\|_{FM}=\mu(X)=\|\mu\|_{TV}$ for all $\mu\in\mathcal{M}_+(X)$.

Let us now write $\mathcal{D}(X)$ and $\mathcal{D}_+(X)$ for the linear space and the convex cone, respectively, generated by the set $\{\delta_x:\,x\in X\}\subset BL(X)^*$ of functionals of the form 
\begin{align*}
\delta_x(f):=f(x)\;\;\;\text{for any}\;\;\;f\in BL(X),\;x\in X,
\end{align*} 
which can be also viewed as Dirac measures. 
It is not hard to check that the \hbox{$\|\cdot\|_{BL}^*$-closure} of $\mathcal{D}(X)$ is a~separable Banach subspace of $BL(X)^*$. Moreover, assuming that $X$ is complete, one can show 
that $\mathcal{M}_+(X)=\text{cl}\,\mathcal{D}_+(X)$ \hbox{(cf. \cite[Theorems 2.3.8--2.3.19]{worm})}, which in turn implies that $\mathcal{M}_{sig}(X)$ is  a~\hbox{$\|\cdot\|_{BL}^*$-dense} subspace of $\text{cl}\,\mathcal{D}(X)$, i.e. \hbox{$\text{cl}\,\mathcal{M}_{sig}(X)=\text{cl}\,\mathcal{D}(X)$}. The key idea underlying the proof of this result is to show that every measure $\mu\in\mathcal{M}_+(X)$ can be represented by the Bochner integral (for details see e.g. \cite{vector_measures}) of the continuous map $X\ni x\mapsto \delta_x\in \text{cl}\,\mathcal{D}(X)$, i.e.
\begin{align*}
\mu=\int_X \delta_x\,\mu(dx)\in \text{cl}\, \mathcal{D}_+(X).
\end{align*}
In particular, it follows that $\left(\text{cl}\,\mathcal{M}_{sig}(X),\|\cdot\|_{BL}^*|_{\text{cl}\,\mathcal{D}(X)}\right)$ is a separable Banach space.

What is more, according to \cite[Theorem 2.3.22]{worm}, the dual space of \hbox{$\text{cl}\,\mathcal{M}_{sig}(X)=\text{cl}\,\mathcal{D}(X)$} with the operator norm
\begin{align*}
\|\kappa\|_{\text{cl}\,\mathcal{D}}^{**}:=\sup\{|\kappa(\varphi)|:\,\varphi \in \text{cl}\,\mathcal{D}(X), \; \|\varphi\|_{BL}^*\leq 1\},\;\;\; \kappa\in[\text{cl}\,\mathcal{D}(X)]^*,
\end{align*}
is isometrically isomorphic with the space $(BL(X),\|\cdot\|_{BL})$, and each functional $\kappa\in [\text{cl}\,\mathcal{D}(X)]^*$ can be represented by some $f\in BL(X)$, in the sense that $\kappa(\varphi)=\varphi(f)$ for $\varphi\in \text{cl}\,\mathcal{D}(X)$. In particular, we then have 
$\kappa(\mu)=I_{\mu}(f)=\langle f,\mu\rangle$,  
whenever $\mu\in\mathcal{M}_{sig}(X)$ (by identyfing $\mu$ with $I_{\mu}$). 
 
In view of the above observations, the norm $\|\cdot\|_{BL}^*$ is convenient for integrating (in the Bochner sense) measure-valued functions $p: E\to\mathcal{M}_{sig}(X)$, where $E$ is an arbitrary measure space. The Pettis measurability theorem (see e.g. \cite[Chapter II, Theorem 2]{vector_measures}), together with the separability of $\text{cl}\,\mathcal{M}_{sig}(X)$, ensures that $p$ is strongly measurable as a map with values in $\text{cl}\,\mathcal{M}_{sig}(X)$ (i.e.~it is a pointwise a.e. limit of simple functions) if and only if, for any $f\in BL(X)$, the functional \hbox{$E\ni t\mapsto\langle f,p(t)\rangle\in\mathbb{R}$} is measurable. Moreover, we have at our disposal the following result (see \cite[Propositions 3.2.3-3.2.5]{worm} or \cite[Proposition C.2]{hille_evers}), which provides a tractable condition guaranteeing the integrability of $p$ and ensuring that the integral is an element of $\mathcal{M}_{sig}(X)$:

\begin{theorem}\label{thm:b_int}
Let $(E,\Sigma)$ be a measurable space with a $\sigma$-finite measure $\nu$, and let \linebreak\hbox{$p:E \to\mathcal{M}_{sig}(X)$} be a strongly measurable function. Suppose that there exists a real-valued function $g\in\mathcal{L}^1(E,\Sigma,\nu)$ such that 
\begin{align*}
\|p(t)\|_{TV}\leq g(t)\;\;\;\text{for a.e.}\;\;\; t\in E.
\end{align*}
Then then the following conditions holds:
\begin{itemize}
\item[(i)] The function $p$ is Bochner $\nu$-integrable as a map acting from $(E,\Sigma)$ to \linebreak$\left(\text{cl}\,\mathcal{M}_{sig}(X),\|\cdot\|_{BL}^*|_{\text{cl}\,\mathcal{D}(X)}\right)$. Moreover, we have
$$\left\|\int_E p(t)\,\nu(dt)\right\|_{TV}\leq \int_E \|p(t)\|_{TV}\,\nu(dt).$$
\item[(ii)] The Bochner integral $\int_E p(t)\,\nu(dt)\in\text{cl}\,\mathcal{M}
_{sig}(X)$ belongs to~$\mathcal{M}
_{sig}(X)$ and satisfies
\begin{align*}
\left(\int_E p(t)\nu(dt)\right)(A)=\int_E p(t)(A)\nu(dt)\;\;\;\text{for any}\;\;\; A\in \mathcal{B}_{X}.
\end{align*}
\end{itemize}
\end{theorem}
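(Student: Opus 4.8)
The plan is to dispose of the abstract Bochner integrability first, which is routine, and then to spend the bulk of the argument identifying the resulting element of $\text{cl}\,\mathcal{M}_{sig}(X)$ with the concrete signed measure defined setwise in (ii); the total-variation estimate in (i) will drop out at the end. For the integrability, I would invoke Bochner's criterion: a strongly measurable map into a Banach space is Bochner integrable precisely when $t\mapsto\|p(t)\|_{BL}^*$ is $\nu$-integrable. Since $\|p(t)\|_{BL}^*=\|p(t)\|_{FM}\le\|p(t)\|_{TV}\le g(t)$ and $g\in\mathcal{L}^1(E,\Sigma,\nu)$, this is immediate, and it produces $\mu^*:=\int_E p(t)\,\nu(dt)\in\text{cl}\,\mathcal{M}_{sig}(X)$. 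Applying the bounded functional $\kappa_f$ attached to each $f\in BL(X)$ (recall that $[\text{cl}\,\mathcal{M}_{sig}(X)]^*$ is isometrically $BL(X)$) and using that Bochner integrals commute with bounded linear maps, I get the scalar identity $\langle f,\mu^*\rangle=\int_E\langle f,p(t)\rangle\,\nu(dt)$ for all $f\in BL(X)$.

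The heart of the proof is to show that the setwise candidate $\tilde\mu(A):=\int_E p(t)(A)\,\nu(dt)$, $A\in\mathcal{B}_X$, is well defined. This is where the main obstacle lies: strong measurability only gives, through the Pettis theorem, measurability of $t\mapsto\langle f,p(t)\rangle$ for $f\in BL(X)$, whereas set-evaluation $\mu\mapsto\mu(A)$ is not $\|\cdot\|_{BL}^*$-continuous, so measurability of $t\mapsto p(t)(A)$ is not automatic. To close this gap I would run a functional monotone-class argument. Let $\mathcal{H}$ be the family of bounded Borel $f$ for which $t\mapsto\langle f,p(t)\rangle$ is measurable. Then $\mathcal{H}$ is a vector space containing the constants, it is stable under bounded pointwise limits (for fixed $t$ this is dominated convergence against the finite measure $|p(t)|$, legitimate because $\|p(t)\|_{TV}\le g(t)<\infty$ a.e.), and it contains the point-separating algebra $BL(X)$, which generates $\mathcal{B}_X$. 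Hence $\mathcal{H}\supseteq BM(X)$, so $t\mapsto p(t)(A)=\langle\mathbbm{1}_A,p(t)\rangle$ is measurable, and $|p(t)(A)|\le g(t)$ makes it integrable; thus $\tilde\mu$ is well defined and finite.

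It then remains to check that $\tilde\mu$ is a measure and coincides with $\mu^*$. Finite additivity is clear, and for countable additivity over disjoint $(A_n)$ with union $A$ the partial sums $p(t)\!\left(\bigcup_{n\le N}A_n\right)$ are dominated by $g(t)$ and converge to $p(t)(A)$, so dominated convergence gives $\tilde\mu(A)=\sum_n\tilde\mu(A_n)$ and hence $\tilde\mu\in\mathcal{M}_{sig}(X)$. Approximating an arbitrary $f\in BM(X)$ uniformly by simple functions, and using dominated convergence once more, extends the setwise formula to $\langle f,\tilde\mu\rangle=\int_E\langle f,p(t)\rangle\,\nu(dt)$ for every $f\in BM(X)$; comparison with the identity established above yields $\kappa_f(I_{\tilde\mu})=\kappa_f(\mu^*)$ for all $f\in BL(X)$, and since these functionals separate the points of $\text{cl}\,\mathcal{M}_{sig}(X)$ I conclude $\mu^*=I_{\tilde\mu}$, which is exactly assertion (ii). Finally, taking the supremum over $f\in BM(X)$ with $\|f\|_\infty\le1$ in $|\langle f,\mu^*\rangle|\le\int_E|\langle f,p(t)\rangle|\,\nu(dt)\le\int_E\|p(t)\|_{TV}\,\nu(dt)$ delivers the bound $\|\mu^*\|_{TV}\le\int_E\|p(t)\|_{TV}\,\nu(dt)$ in (i).
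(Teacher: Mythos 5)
Your proof is correct, but note first that the paper itself contains no proof of this theorem: it is imported verbatim from \cite[Propositions 3.2.3--3.2.5]{worm} and \cite[Proposition C.2]{hille_evers}, so the comparison is with those sources rather than with an in-paper argument. You have correctly isolated the only genuinely delicate point: since evaluation $\mu\mapsto\mu(A)$ is not $\|\cdot\|_{BL}^*$-continuous, strong measurability of $p$ yields (via Pettis) measurability of $t\mapsto\langle f,p(t)\rangle$ only for $f\in BL(X)$, and the passage to indicators must be earned. Your functional monotone-class argument does this soundly: $BL(X)$ is an algebra containing the constants which generates $\mathcal{B}_X$, because $x\mapsto\min\{1,\dist(x,C)\}$ is bounded Lipschitz and vanishes exactly on the closed set $C$; and the stability of $\mathcal{H}$ under bounded pointwise limits is indeed dominated convergence for each fixed $t$ --- though there you do not need the bound $\|p(t)\|_{TV}\leq g(t)$ at all, since every $p(t)$ lies in $\mathcal{M}_{sig}(X)$ and is a finite signed measure by definition (invoking the a.e.\ bound is actually slightly awkward, as it fails on a null set). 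The cited sources reach the same measurability by a set-level variant: for open $U$ one writes $\mathbbm{1}_U$ as the increasing bounded limit of the $BL(X)$-functions $x\mapsto\min\{1,n\,\dist(x,X\setminus U)\}$, and then Dynkin's $\pi$--$\lambda$ theorem (the class of $A$ for which $t\mapsto p(t)(A)$ is measurable is a $\lambda$-system containing the open sets) finishes; your device and theirs are interchangeable. The remaining steps --- Bochner's criterion with dominating function $g$, commuting the functionals $\kappa_f$ with the Bochner integral, countable additivity of $\tilde{\mu}$ via dominated convergence, the identification $\mu^*=I_{\tilde{\mu}}$ from the fact that $\{\kappa_f:\,f\in BL(X)\}$ exhausts $[\text{cl}\,\mathcal{D}(X)]^*$ and hence separates points, and the final supremum over $f\in BM(X)$ with $\|f\|_{\infty}\leq 1$ --- are all carried out correctly, and in the right order: the $\|\cdot\|_{TV}$-estimate in (i) genuinely requires (ii) first, since pairing $\mu^*$ with merely bounded Borel $f$ only makes sense once $\mu^*$ is known to be a measure.
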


Another crucial observation is that the restriction of the weak topology on $\mathcal{M}
_{sig}(X)$, generated by $BC(X)$, to $\mathcal{M}_+(X)$ equals to the topology induced by the norm \linebreak\hbox{$\|\cdot\|_{BL}^*|_{\mathcal{M}_+(X)}=\|\cdot\|_{FM}|_{\mathcal{M}_+(X)}$} \hbox{(cf. \cite[Theorem 18]{dudley_baire} or \cite[Theorem 8.3.2]{bogachev})}. In particular, the following holds:
\begin{theorem}
Let $\mu_n,\mu\in\mathcal{M}_+(X)$ for every $n\in\mathbb{N}$. Then $\lim_{n\to\infty} \|\mu_n-\mu\|_{FM}=0$ if and only if $\mu_n\stackrel{w}{\to}\mu$, that is, 
$$\lim_{n\to\infty}\langle f,\mu_n\rangle=\left\langle f,\mu\right\rangle\;\;\; \text{for any}\;\;\; f\in BC(X).$$
\end{theorem}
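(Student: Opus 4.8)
The plan is to prove both implications directly, even though the assertion is essentially a sequential restatement of the metrization result recalled just above (Dudley, cf. \cite{dudley_baire,bogachev}). The crucial analytic input is the bound $|\langle f,\mu_n-\mu\rangle|\le \|f\|_{BL}\,\|\mu_n-\mu\|_{FM}$, valid for every $f\in BL(X)$ (an immediate consequence of the definition of $\|\cdot\|_{FM}$), together with the fact that $X$, being a closed subset of a separable Banach space, is a Polish space, so that Prokhorov's theorem is available. I note first that testing against $f\equiv 1\in BL(X)\cap BC(X)$ yields $\mu_n(X)\to\mu(X)$ under either hypothesis, whence $M:=\sup_n \mu_n(X)\vee\mu(X)<\infty$.

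For the implication $\|\mu_n-\mu\|_{FM}\to 0\Rightarrow\mu_n\stackrel{w}{\to}\mu$, I would first use the displayed bound to deduce $\langle f,\mu_n\rangle\to\langle f,\mu\rangle$ for \emph{every} $f\in BL(X)$ (of arbitrary $\|\cdot\|_{BL}$-norm, since the right-hand side tends to $0$ for each fixed $f$). Then, for a closed set $F\subset X$, I would approximate its indicator from above by the bounded Lipschitz functions $f_k(x):=\max\{1-k\,\dist(x,F),0\}$, which decrease pointwise to $\mathbf 1_{F}$. Since $\mathbf 1_F\le f_k$ and $\langle f_k,\mu_n\rangle\to\langle f_k,\mu\rangle$, one gets $\limsup_n\mu_n(F)\le\langle f_k,\mu\rangle$, and letting $k\to\infty$ (dominated convergence) gives the Portmanteau inequality $\limsup_n\mu_n(F)\le\mu(F)$. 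Combined with $\mu_n(X)\to\mu(X)$, the Portmanteau theorem for finite measures yields weak convergence.

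For the reverse implication $\mu_n\stackrel{w}{\to}\mu\Rightarrow\|\mu_n-\mu\|_{FM}\to 0$, which I expect to be the main obstacle, the difficulty is that $\|\mu_n-\mu\|_{FM}$ is a supremum over the infinite family $\mathcal F:=\{f\in BL(X):\|f\|_{BL}\le 1\}$, whereas weak convergence controls only one test function at a time. The remedy is a tightness-plus-equicontinuity argument. Fixing $\epsilon>0$, Prokhorov's theorem provides, for the tight family $\{\mu_n\}\cup\{\mu\}$, a compact $K\subset X$ with $\mu_n(X\setminus K)<\epsilon$ for all $n$ and $\mu(X\setminus K)<\epsilon$. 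Every $f\in\mathcal F$ satisfies $\|f\|_\infty\le 1$ and $|f|_{Lip}\le 1$, so $\{f|_K:f\in\mathcal F\}$ is uniformly bounded and equicontinuous; by Arzelà--Ascoli it is totally bounded in $C(K)$, and I would extract a finite $\delta$-net $f_1,\dots,f_m\in\mathcal F$ (in the sup-norm over $K$). Splitting each integral over $K$ and $X\setminus K$ then gives, for any $f\in\mathcal F$ and a suitable net element $f_i$,
\[
|\langle f,\mu_n-\mu\rangle|\le |\langle f_i,\mu_n-\mu\rangle|+2\delta M+4\epsilon,
\]
where $2\delta M$ bounds the $K$-contribution (via $\|f-f_i\|_{\sup,K}<\delta$ and $\mu_n(K),\mu(K)\le M$) and $4\epsilon$ the $(X\setminus K)$-contribution (via $\|f-f_i\|_\infty\le 2$ and the tightness bounds). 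Since there are only finitely many net elements, $\max_{1\le i\le m}|\langle f_i,\mu_n-\mu\rangle|\to 0$; taking the supremum over $f\in\mathcal F$ and then $n\to\infty$ yields $\limsup_n\|\mu_n-\mu\|_{FM}\le 2\delta M+4\epsilon$, and letting $\delta,\epsilon\to 0$ closes the argument.

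The heart of the matter, and the only genuinely non-routine step, is this second implication: uniformizing the one-test-function-at-a-time weak convergence into a supremum estimate over the whole unit ball of $BL(X)$. This is precisely where the completeness of $X$ (through Prokhorov's theorem) and the uniform Lipschitz bound (through Arzelà--Ascoli) are indispensable; on a merely separable, non-complete space the tightness step would fail. All remaining estimates are routine triangle-inequality bookkeeping.
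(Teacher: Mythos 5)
Your proof is correct, but it is worth noting that the paper does not prove this statement at all: it is quoted as a known result, with the metrization theorem attributed to Dudley (\cite[Theorem 18]{dudley_baire}) and to \cite[Theorem 8.3.2]{bogachev}. What you have done is reconstruct, essentially verbatim, the standard textbook proof of that cited result: the easy direction via the bound $|\langle f,\mu_n-\mu\rangle|\leq\|f\|_{BL}\|\mu_n-\mu\|_{FM}$ plus the Portmanteau approximation $f_k(x)=\max\{1-k\,\dist(x,F),0\}\downarrow \mathbbm{1}_F$, and the hard direction by uniformizing over the unit ball of $BL(X)$ through tightness (converse half of Prokhorov, available since $X$ is Polish) and an Arzel\`a--Ascoli $\delta$-net on the compact set $K$. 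Your bookkeeping is sound: the split $|\langle f,\mu_n-\mu\rangle|\leq|\langle f_i,\mu_n-\mu\rangle|+2\delta M+4\epsilon$ is valid, and your identification of the second implication as the genuinely non-routine step, and of completeness as indispensable there, is exactly right. Two minor points you should make explicit if this were to stand alone: Prokhorov's theorem is usually stated for probability measures, so for finite measures you need the trivial reduction by normalization (and the degenerate case $\mu(X)=0$, where $\|\mu_n-\mu\|_{FM}\leq\mu_n(X)+\mu(X)\to 0$ directly, should be set aside first); and the Portmanteau theorem you invoke at the end must be its version for finite, not necessarily normalized, measures, which requires precisely the mass convergence $\mu_n(X)\to\mu(X)$ that you established. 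What your route buys is self-containedness at the cost of a page of argument; what the paper's citation buys is brevity, which is reasonable given that the theorem plays a purely auxiliary role there.
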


Let us now recall several basic definitions concerning Markov chains. First of all, a~function \hbox{$P:X\times\mathcal{B}_X\to[0,1]$} is called a~stochastic kernel if, for any fixed $A\in \mathcal{B}_X$, \hbox{$x\mapsto P(x,A)$} is a Borel-measurable map on $X$, and, for any fixed $x\in X$, $A\mapsto P(x,A)$ is a~probability Borel measure on $\mathcal{B}_X$. We can consider two operators corresponding to a stochastic kernel $P$, namely
\begin{equation} \label{def:markov} 
\mu P(A)=\int_{X} P(x,A)\,\mu(dx)\;\;\;\text{for}\;\;\; \mu\in\mathcal{M}_{sig}(X),\;A\in \mathcal{B}_X
\end{equation}
and
\begin{equation}\label{def:dual} 
Pf(x)=\int_{X} f(y)\,P(x,dy)\;\;\;\text{for}\;\;\;f\in BM(X),\; x\in X. 
\end{equation}
The operator $(\cdot)P:\mathcal{M}_{sig}(X) \to \mathcal{M}_{sig}(X)$, given by \eqref{def:markov}, is called a~regular Markov operator. It is easy to check that
\begin{equation*}
\langle f, \mu P\rangle=\langle Pf,\mu\rangle\;\;\;\mbox{for any}\;\;\; f\in BM(X),\;\mu\in\mathcal{M}_{sig}(X),
\end{equation*}
and, therefore, $P(\cdot):BM(X)\to BM(X)$, defined by \eqref{def:dual}, is said to be the dual operator of $(\cdot)P$. 

A regular Markov operator $(\cdot)P$ is said to be Feller if its dual operator $P(\cdot)$ preserves continuity, that is, $Pf\in BC(X)$ for every $f\in BC(X)$. A measure $\mu^*\in\mathcal{M}_+(X)$ is called an invariant measure for~$(\cdot)P$ whenever $\mu^*P=\mu^*$. 

We will say that the operator $(\cdot)P$ is exponentially ergodic in the Fortet-Mourier distance if there exists a unique measure $\mu^*\in\mathcal{M}_1(X)$ invariant of $(\cdot)P$, for which there is $q\in [0,1)$ such that, for any $\mu\in\mathcal{M}_{1,1}(X)$ and some constant $C(\mu)\in\mathbb{R}_+$, we have
$$\left\|\mu P^n-\mu^*\right\|_{FM}\leq C(\mu)q^n\;\;\;\text{for any}\;\;\; n\in\mathbb{N}.$$
The measure $\mu^*$ is then usually called exponentially attracting.

A regular Markov semigroup $({P}(t))_{t\in\mathbb{R}_+}$ is a family of regular Markov operators  \linebreak\hbox{$(\cdot){P}(t): \mathcal{M}_{sig}(X) \to \mathcal{M}_{sig}(X)$}, $t\in\mathbb{R}_+:=[0,\infty)$, which form a semigroup (under composition) with the identity transformation $(\cdot){P}(0)$ as the unity element. 
Provided that $(\cdot){P}(t)$ is a Markov-Feller operator for every $t\in\mathbb{R}_+$, the semigroup $({P}(t))_{t\in\mathbb{R}_+}$ is said to be Markov-Feller, too. If, for some $\mu^*\in\mathcal{M}_{fin}(X)$, $\mu^*{P}(t)=\mu^*$ for every $t\in\mathbb{R}_+$, then we call $\mu^*$ an invariant measure of $({P}(t))_{t\in\mathbb{R}_+}$.

\section{Description of the model}\label{sec:model}

Recall that $X$ is a closed subset of some separable Banach space $(H,\|\cdot\|)$, and let $(\Theta,\mathcal{B}_{\Theta},\vartheta)$ be a~topological measure space with a $\sigma$-finite Borel measure~$\vartheta$. With a slight abuse of notation, we will further write $d\theta$ only, instead of $\vartheta(d\theta)$.

Let us consider a PDMP $({X}(t))_{t\in\mathbb{R}_+}$, evolving on the space $X$ through random jumps occuring at the jump times $\tau_n$, $n\in\mathbb{N}$, of a homogeneous Poisson process with intensity $\lambda>0$. The state right after the jump is attained by a transformation $w_{\theta}:X\to X$, randomly selected from the set $\{w_{\theta}:\theta\in\Theta\}$. The probability of choosing $w_{\theta}$ is determined by a~place-dependant density function $\theta \mapsto p(x,\theta)$, where $x$ describes the state of the process just before the jump. It is required that the maps \hbox{$(x,\theta)\mapsto p(x,\theta)$} and $(x,\theta)\mapsto w_{\theta}(x)$ are continuous. Between the jumps, the process is deterministically driven by a~continuous (with respect to each variable) semi-flow $S:\mathbb{R}_+\times X\to X$. The flow property means, as usual, that
$S(0,x)=x$ and $S(s+t,x)=S(s,S(t,x))$ for any $x\in X$ and any $s,t\in\mathbb{R}_+$.

Let us now move on to the formal description of the model. 
For any $\mu\in\mathcal{M}_1(X)$ and any  $\lambda>0$ we first define, on some suitable probability space $(\Omega, \mathcal{F}, \mathbb{P}_{\mu})$, a stochastic proces $(X_n)_{n\in\mathbb{N}_0}$ with initial distribution $\mu$, by setting
\begin{align*}
X_{n+1}=w_{\theta_{n+1}}\left(S\left(\Delta\tau_{n+1},X_{n}\right)\right)\;\;\;\text{for}\;\;\; n\in\mathbb{N}_0,
\end{align*}
with $\Delta\tau_{n+1}=\tau_{n+1}-\tau_n$, where $(\tau_n)_{n\in\mathbb{N}_0}$ and $(\theta_n)_{n\in\mathbb{N}}$ are sequences of random variables with values in $\mathbb{R}_+$ and $\Theta$, respectively, defined in such a way that $\tau_0=0$,  $\tau_n \to \infty$ $\mathbb{P}_{\mu}$-a.s., as $n\to\infty$, and 
\begin{gather*}
\mathbb{P}_{\mu} \left(\Delta \tau_{n+1} \leq t \,|\, W_n\right)=1-e^{-\lambda t}\;\;\;\text{for any}\;\;\;t\in\mathbb{R}_+,\\
\mathbb{P}_{\mu}\left(\theta_{n+1}\in B\,|\,S\left(\Delta\tau_{n+1},X_n\right)=x,\,W_n\right)=\int_Bp(x,\theta)\,d\theta\;\;\;\text{for any}\;\;\;x\in X,\;B\in\mathcal{B}_{\Theta},
\end{gather*} 
with $W_0:=X_0$ and  $W_n:=(W_0,\tau_1,\ldots,\tau_n,\theta_1,\ldots,\theta_n)$ for $n\in\mathbb{N}$. We also demand that, for any $n\in\mathbb{N}_0$, the variables $\Delta\tau_{n+1}$ and $\theta_{n+1}$ are conditionally independent given $W_n$.

A standard computation shows that, for any $\lambda>0$, $(X_{n})_{n\in\mathbb{N}_0}$ is a time-homogeneous Markov chain with transition law $P_{\lambda}:X\times \mathcal{B}_X\to[0,1]$ given by 
\begin{align}\label{def:Pi_lambda}
P_{\lambda}(x,A)=\int_0^{\infty}\lambda e^{-\lambda t}\int_{\Theta}p(S(t,x),\theta)\,\mathbbm{1}_A\left(w_{\theta}(S(t,x))\right)\,d\theta\,dt\;\;\;\text{for}\;\;\;x\in X,\;A\in\mathcal{B}_X,
\end{align}
that is, 
\begin{align*}P_{\lambda}(x,A)=P\left(X_{n+1}\in A\, | \, X_n=x\right)  \;\;\; \text{for any} \;\;\;x\in X,\;A\in\mathcal{B}_X.
\end{align*}

On the same probability space, we now define a Markov process $({X}(t))_{t\in\mathbb{R}_+}$, as an  iterpolation of the chain $(X_n)_{n\in\mathbb{N}_0}$, namely
\begin{align*}
{X}(t)=S\left(t-\tau_n,X_n\right)\;\;\;\text{for}\;\;\;t\in[\tau_n,\tau_{n+1}),\;\;\;n\in\mathbb{N}_0.
\end{align*} 
By $( {P}_{\lambda}(t))_{t\in\mathbb{R}_+}$ we shall denote the Markov semigroup associated with the process $\left({X}(t)\right)_{t\in\mathbb{R}_+}$, so that, for any $t\in\mathbb{R}_+$, ${P}_{\lambda}(t)$ is the Markov operator corresponding to the stochastic kernel satisfying
\begin{align}\label{def:P(t)}
{P}_{\lambda}(t)(x,A)= \mathbb{P}_{\mu}\left({X}(s+t)\in A\, | \, {X}(s)=x\right)  \;\;\;\text{for any}\;\;\; A\in\mathcal{B}_X,\; x\in X,\; s \in\mathbb{R}_+.
\end{align}

We further assume that there exist a point $\bar{x}\in X$, a Borel measurable function \hbox{$J:X\to[0,\infty)$} and constants \hbox{$\alpha\in\mathbb{R}$, $L,L_w,L_p,\lambda_{\min},\lambda_{\max},\overline{p}>0$}, such that
\begin{align}\label{constants_ass}
LL_w+\frac{\alpha}{\lambda}<1\;\;\;\text{for each}\;\;\;\lambda\in [\lambda_{\min},\lambda_{\max}],
\end{align}
and, for any $x,y\in X$, the following conditions hold:
\begin{gather}
\sup_{x\in X}\int_0^{\infty}   e^{-\lambda_{\min} t}\int_{\Theta} p\left(S(t,x),\theta\right)
\left\|w_{\theta} \left(S(t,\bar{x})\right)\right\|\,d\theta\,dt<\infty,
\label{A1}\\
\left\|S(t,x)-S(t,y)\right\|\leq Le^{\alpha t}\|x-y\|\;\;\;\text{for}\;\;\;t\in\mathbb{R}_+,
\label{A2}\\
\left\|S(t,x)-S(s,x)\right\|\leq\left\{
\begin{array}{ll}
(t-s)e^{\alpha s}J(x),&\text{if}\;\;\alpha\leq 0\\
(t-s)e^{\alpha t}J(x),&\text{if}\;\;\alpha >0
\end{array}
\right.\;\;\;\text{for}\;\;\;0\leq s\leq t,
\label{A6}\\
\int_{\Theta} p(x,\theta)\,\left\|w_{\theta}(x)-w_{\theta}(y)\right\|\,d\theta\leq L_w\left\|x-y\right\|,
\label{A3}\\
\int_{\Theta} |p(x,\theta)-p(y,\theta)|\,d\theta\leq L_{p} \left\|x-y\right\|,
\label{A4}\\
\begin{split}
&\int_{\Theta(x,y)}\min\{p(x,\theta),p(y,\theta)\}\,d\theta\geq \overline{p}, 
\;\;\;\text{where}\\
&\Theta(x,y):=\{\theta\in\Theta:\, 
\left\|w_{\theta}(x)-w_{\theta}(y)\right\|\leq L_w \left\|x-y\right\|\}.
\end{split}\label{A5}
\end{gather}

Note that, if $(H,\left\langle\cdot|\cdot\right\rangle)$ is a Hilbert space and $A:X\to H$ is an $\alpha$-dissipative operator with $\alpha\leq 0$, i.e.
\begin{align*}
\left\langle Ax-Ay|x-y\right\rangle\leq \alpha\left\|x-y\right\|^2\;\;\;\mbox{for any}\;\;\;x,y\in X,
\end{align*}
which additionally satisfies the so-called range condition, that is, for some $T>0$, 
\begin{align*}
X\subset \text{Range}\left(\operatorname{id}_X-tA\right)\;\;\;\text{for}\;\;\; t\in(0,T),
\end{align*}
then, for any $x\in X$, the Cauchy problem of the form
\begin{align*}
\left\{\begin{array}{l}
y'(t)=A(y(t))\\
y(0)=x
\end{array}\right.
\end{align*}
has a unique solution $t\mapsto S(t,x)$ such that the semi-flow $S$ enjoys conditions \eqref{A2}, with $L=1$, and   \eqref{A6} (cf. \cite[Theorem 5.3, Corollary 5.4]{cl} and \cite[Section 3]{dawid}).

Note that, upon assuming \eqref{constants_ass}, we have $\lambda>\max\{0,\alpha\}$ for any $\lambda\in[\lambda_{\min},\lambda_{\max}]$. Let us further write shortly 
\begin{align}\label{def:bar_alpha}
\bar{\alpha}:=\max\{0,\alpha\}.
\end{align}

\section{Some proerties of the operator $P_{\lambda}$}\label{sec:properties}

Consider the abstract model given in Section \ref{sec:model}. In order to simplify notation, for any $t\in\mathbb{R}_+$, let us introduce the function $\Pi_{(t)}:X\times \mathcal{B}_X\to[0,1]$ given by
\begin{align}\label{def:P_phi_t}
\Pi_{(t)}(x,A):=\int_{\Theta}p\left(S(t,x),\theta\right)\,\mathbbm{1}_A\left(w_{\theta}\left(S(t,x)\right)\right)\,d\theta
\;\;\;\text{for}\;\;\;x\in X,\;A\in\mathcal{B}_X.
\end{align}
Note that $\Pi_{(t)}$ is a stochastic kernel, and that the corresponding Markov operator is Feller, due to the continuity of $p$, $S$ and $w_{\theta}$, $\theta\in\Theta$. 
Moreover, observe that, for an arbitrary $\lambda>0$, we have 
\begin{align}
\begin{split}\label{eq:P_lambda_P_phi}
\mu P_{\lambda}(A)&=\int_X \int_0^{\infty} \lambda e^{-\lambda t} \Pi_{(t)}(x,A)\,dt\,\mu(dx)=\int_0^{\infty}\lambda e^{-\lambda t}\int_X \Pi_{(t)}(x,A) \,\mu(dx)\,dt\\
&=\int_0^{\infty} \lambda e^{-\lambda t} \mu\Pi_{(t)}(A)\,dt\;\;\;\text{for any}\;\;\; \mu\in\mathcal{M}_{sig}(X),\;A\in \mathcal{B}_X.
\end{split}
\end{align}

\begin{lemma}\label{lem:Bochner}
Suppose that conditions \eqref{A6}-\eqref{A4} hold with constants satisfying \eqref{constants_ass}. Then, for any $\lambda>0$ and any $\mu\in\mathcal{M}_{sig}(X)$ satisfying $\langle J,|\mu|\rangle<\infty$, where $J$ is given in \eqref{A6}, the map $t\mapsto e^{-\lambda t}\mu \Pi_{(t)}$ is Bochner integrable on $\mathbb{R}_+$, and we have
\begin{align*}
\mu P_{\lambda}=\int_0^{\infty}\lambda e^{-\lambda t} \mu \Pi_{(t)}\,dt.
\end{align*}
\end{lemma}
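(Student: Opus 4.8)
The plan is to verify the two hypotheses of Theorem~\ref{thm:b_int} for the map $p(t):=e^{-\lambda t}\mu\Pi_{(t)}$, with $E=\mathbb{R}_+$ equipped with Lebesgue measure, and then to identify the resulting Bochner integral with $\mu P_\lambda$ by evaluating both objects on Borel sets and invoking \eqref{eq:P_lambda_P_phi}. Thus two things must be established: strong measurability of $p$, and an integrable dominating function for $t\mapsto\|p(t)\|_{TV}$.

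For the domination I would exploit that, for each fixed $t$, $\Pi_{(t)}$ is a stochastic kernel, so its Markov operator is a total-variation contraction: for $f\in BM(X)$ with $\|f\|_{\infty}\le1$ one has $\|\Pi_{(t)}f\|_{\infty}\le1$, whence $\|\mu\Pi_{(t)}\|_{TV}\le\|\mu\|_{TV}$ by duality. Consequently $\|p(t)\|_{TV}=e^{-\lambda t}\|\mu\Pi_{(t)}\|_{TV}\le e^{-\lambda t}\|\mu\|_{TV}=:g(t)$, and $g\in\mathcal{L}^1(\mathbb{R}_+)$ with $\int_0^\infty g\,dt=\|\mu\|_{TV}/\lambda<\infty$. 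Note that this step uses only finiteness of $\mu$, not the moment condition.

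The substantive step is strong measurability, and this is precisely where \eqref{A6}, \eqref{A3}, \eqref{A4} and the hypothesis $\langle J,|\mu|\rangle<\infty$ enter. By the Pettis theorem recalled above, together with separability of $\text{cl}\,\mathcal{M}_{sig}(X)$, it suffices that $t\mapsto\langle f,p(t)\rangle$ be measurable for every $f\in BL(X)$; I would in fact prove the stronger statement that $t\mapsto\mu\Pi_{(t)}$ is locally Lipschitz continuous in $\|\cdot\|_{FM}$. Fixing $f$ with $\|f\|_{BL}\le1$ and $0\le s\le t$, I would write $(\Pi_{(t)}f-\Pi_{(s)}f)(x)$ and split the difference of the two $\Theta$-integrals into a term in which only the density $p$ changes and a term in which only the argument of $f\circ w_\theta$ changes. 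Bounding the first by \eqref{A4} and the second by \eqref{A3} (applied with first slot $S(s,x)$, which matches the density kept in that term) yields the pointwise bound $|(\Pi_{(t)}f-\Pi_{(s)}f)(x)|\le(L_p+L_w)\,\|S(t,x)-S(s,x)\|$; then \eqref{A6} controls $\|S(t,x)-S(s,x)\|$ by $|t-s|\,e^{\bar\alpha\tau}J(x)$ for $s,t\in[0,\tau]$, with $\bar\alpha$ as in \eqref{def:bar_alpha}. Integrating against $|\mu|$ and taking the supremum over admissible $f$ gives $\|\mu\Pi_{(t)}-\mu\Pi_{(s)}\|_{FM}\le(L_p+L_w)\,e^{\bar\alpha\tau}\,\langle J,|\mu|\rangle\,|t-s|$, which is finite exactly because $\langle J,|\mu|\rangle<\infty$. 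Multiplying by the continuous scalar $e^{-\lambda t}$ preserves continuity, so $p$ is continuous into the separable Banach space $\text{cl}\,\mathcal{M}_{sig}(X)$, and hence strongly measurable.

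With both hypotheses of Theorem~\ref{thm:b_int} verified, part~(i) yields Bochner integrability of $p$, and thus of $\lambda p$, while part~(ii) gives, for every $A\in\mathcal{B}_X$, the identity $\left(\int_0^\infty\lambda e^{-\lambda t}\mu\Pi_{(t)}\,dt\right)(A)=\int_0^\infty\lambda e^{-\lambda t}\mu\Pi_{(t)}(A)\,dt$. Comparing this with the scalar identity \eqref{eq:P_lambda_P_phi}, namely $\mu P_\lambda(A)=\int_0^\infty\lambda e^{-\lambda t}\mu\Pi_{(t)}(A)\,dt$, shows that $\mu P_\lambda$ and $\int_0^\infty\lambda e^{-\lambda t}\mu\Pi_{(t)}\,dt$ agree on all Borel sets and are therefore equal, which is the asserted formula. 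I expect the main obstacle to be the measurability step: the careful bookkeeping in splitting $\Pi_{(t)}f-\Pi_{(s)}f$ so that \eqref{A3} and \eqref{A4} apply at the correct base point, and the verification that the local Lipschitz constant remains integrable against $|\mu|$ through \eqref{A6}. The domination bound and the final identification are comparatively routine once Theorem~\ref{thm:b_int} and \eqref{eq:P_lambda_P_phi} are available.
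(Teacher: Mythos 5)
Your proposal is correct and follows essentially the same route as the paper's proof: strong measurability via the Pettis theorem after establishing (local) Lipschitz continuity of $t\mapsto\mu\Pi_{(t)}$ through the split into a density-change term controlled by \eqref{A4} and an argument-change term controlled by \eqref{A3}, with \eqref{A6} and $\langle J,|\mu|\rangle<\infty$ supplying the Lipschitz bound, then the domination $\|e^{-\lambda t}\mu\Pi_{(t)}\|_{TV}\leq e^{-\lambda t}\|\mu\|_{TV}$ and Theorem~\ref{thm:b_int} combined with \eqref{eq:P_lambda_P_phi} to identify the integral with $\mu P_{\lambda}$. The only deviations are cosmetic (you keep the density at base point $S(s,x)$ where the paper keeps it at $S(t,x)$, and you state the Lipschitz estimate locally on $[0,\tau]$ rather than with the factor $e^{\bar{\alpha}(t+s)}$), so there is nothing substantive to add.
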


\begin{proof}
Let $\mu\in\mathcal{M}_{sig}(X)$ and $\lambda>0$. Note that condition \eqref{A6} implies that
\begin{align*}
\left\|S(t,x)-S(s,x)\right\|\leq J(x)e^{\bar{\alpha}(t+s)}|t-s|\;\;\;\text{for any}\;\;\; s,t\in\mathbb{R}_+,\;x\in X,
\end{align*}
where $\bar{\alpha}$ is given by \eqref{def:bar_alpha}. 
Hence, applying \eqref{A3} and \eqref{A4}, we see that,  for every \hbox{$f\in BL(X)$},
\begin{align*}
\left|\left\langle f,\mu\Pi_{(t)}\right\rangle-\left\langle f,\mu\Pi_{(s)}\right\rangle\right|
=&\left|\left\langle \Pi_{(t)}f-\Pi_{(s)}f,\mu\right\rangle\right|\\
\leq &\int_X \int_{\Theta}p(S(t,x),\theta)\,\left|f(w_{\theta}(S(t,x)))-f(w_{\theta}(S(s,x)))\right|\,d\theta\,|\mu|(dx)\\
&+\int_X\int_{\Theta}\left|p(S(t,x),\theta)-p(S(s,x),\theta)\right|\,\left|f(w_{\theta}(S(s,x)))\right|\,d\theta\,|\mu|(dx)\\
\leq &\left(|f|_{Lip}L_w+\|f\|_{\infty}L_p\right)\int_X \left\|S(t,x)-S(s,x)\right\|\,|\mu|(dx)\\
\leq &\|f\|_{BL}\left(L_w+L_p\right)\left\langle J,|\mu|\right\rangle e^{\bar{\alpha}(t+s)}|t-s|\;\;\;\text{for}\;\;\;s,t\in\mathbb{R}_+.
\end{align*}
This shows that the map $t\mapsto \left\langle f,e^{-\lambda t}\mu \Pi_{(t)}\right\rangle$ is continuous for any $f\in BL(X)$, and thus it is Borel measurable. Consequently, it now follows from the Pettis theorem (cf. \cite{vector_measures}) that  the map $t\mapsto e^{-\lambda t}\mu\Pi_{(t)}$ is strongly measurable. Furthermore, we have
\begin{align*}
\left\|e^{-\lambda t}\mu\Pi_{(t)}\right\|_{TV}
\leq 
\left\|\mu\right\|_{TV}e^{-\lambda t}\;\;\;\text{for any}\;\;\; t\in\mathbb{R}_+,
\end{align*}
which, due to Theorem \ref{thm:b_int}, yields that $t\mapsto e^{-\lambda t}\mu\Pi_{(t)}\in\text{cl}\,\mathcal{M}_{sig}(X)$ is Bochner integrable (with respect to the Lebesgue measure) on $\mathbb{R}_+$, and that the integral is a measure in $\mathcal{M}_{sig}(X)$, which satisfies
\begin{align*}
\left(\int_0^{\infty} \lambda e^{-\lambda t} \mu\Pi_{(t)}\,dt\right)(A)=\int_0^{\infty} \lambda e^{-\lambda t} \mu\Pi_{(t)}(A)\,dt\;\;\;\text{for any}\;\;\; A\in \mathcal{B}_X.
\end{align*}
The assertion of the lemma now follows from \eqref{eq:P_lambda_P_phi}.
\end{proof}

\begin{lemma}\label{lem:equicnt}
Let $f\in BL(X)$. Upon assuming \eqref{A2}, \eqref{A3} and \eqref{A4} with constants satisfying \eqref{constants_ass}, we have
\begin{align*}
\left\|\mu\Pi_{(t)}\right\|_{FM}\leq \left(1+\left(L_w+L_p\right)Le^{\alpha t}\right)\left\|\mu\right\|_{FM}\;\;\;\text{for any}\;\;\;\mu\in\mathcal{M}_{sig}(X),\;t\in\mathbb{R}_+.
\end{align*}
\end{lemma}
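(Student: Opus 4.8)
The plan is to reduce the assertion to an operator-norm estimate for the dual (Koopman) operator $\Pi_{(t)}$ acting on $BL(X)$. Recall from Section~\ref{sec:1} that $\langle f,\mu\Pi_{(t)}\rangle=\langle \Pi_{(t)}f,\mu\rangle$, and that for every $g\in BL(X)$ one has $|\langle g,\mu\rangle|\le\|g\|_{BL}\,\|\mu\|_{FM}$, the latter being immediate from the definition of $\|\cdot\|_{FM}$ as the supremum of $|\langle h,\mu\rangle|$ over the unit ball of $(BL(X),\|\cdot\|_{BL})$ (apply the definition to $h=g/\|g\|_{BL}$). Taking $g=\Pi_{(t)}f$ and then the supremum over all $f$ in the unit ball of $BL(X)$, I would obtain
\[
\|\mu\Pi_{(t)}\|_{FM}=\sup_{\|f\|_{BL}\le 1}\left|\langle \Pi_{(t)}f,\mu\rangle\right|\le\Big(\sup_{\|f\|_{BL}\le 1}\|\Pi_{(t)}f\|_{BL}\Big)\,\|\mu\|_{FM}.
\]
Hence it suffices to establish the homogeneous bound $\|\Pi_{(t)}f\|_{BL}\le\big(1+(L_w+L_p)Le^{\alpha t}\big)\|f\|_{BL}$ for every $f\in BL(X)$, which in particular requires verifying that $\Pi_{(t)}f$ indeed belongs to $BL(X)$.

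For the supremum norm, since $\theta\mapsto p(x,\theta)$ is a probability density we have $\int_{\Theta}p(S(t,x),\theta)\,d\theta=1$, so that $|\Pi_{(t)}f(x)|\le\|f\|_{\infty}$ for every $x\in X$, i.e. $\|\Pi_{(t)}f\|_{\infty}\le\|f\|_{\infty}$. For the Lipschitz seminorm I would fix $x,y\in X$ and split, inserting and subtracting the mixed term $\int_{\Theta}p(S(t,x),\theta)\,f(w_{\theta}(S(t,y)))\,d\theta$, to write
\begin{align*}
\Pi_{(t)}f(x)-\Pi_{(t)}f(y)=&\int_{\Theta}p(S(t,x),\theta)\big[f(w_{\theta}(S(t,x)))-f(w_{\theta}(S(t,y)))\big]\,d\theta\\
&+\int_{\Theta}\big[p(S(t,x),\theta)-p(S(t,y),\theta)\big]\,f(w_{\theta}(S(t,y)))\,d\theta.
\end{align*}
The first integral is controlled via $|f(a)-f(b)|\le|f|_{Lip}\|a-b\|$ and condition \eqref{A3} applied \emph{at the shifted points} $S(t,x),S(t,y)$, which yields $|f|_{Lip}L_w\|S(t,x)-S(t,y)\|$; the second is bounded by $\|f\|_{\infty}$ together with \eqref{A4}, giving $\|f\|_{\infty}L_p\|S(t,x)-S(t,y)\|$. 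Invoking the contraction estimate \eqref{A2}, namely $\|S(t,x)-S(t,y)\|\le Le^{\alpha t}\|x-y\|$, I would conclude
\[
|\Pi_{(t)}f|_{Lip}\le\big(|f|_{Lip}L_w+\|f\|_{\infty}L_p\big)Le^{\alpha t}\le(L_w+L_p)Le^{\alpha t}\|f\|_{BL}.
\]

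Finally, combining the two bounds through the elementary inequality $\max\{a,b\}\le a+b$ for $a,b\ge 0$, I get
\[
\|\Pi_{(t)}f\|_{BL}=\max\left\{\|\Pi_{(t)}f\|_{\infty},\,|\Pi_{(t)}f|_{Lip}\right\}\le\|f\|_{\infty}+(L_w+L_p)Le^{\alpha t}\|f\|_{BL}\le\big(1+(L_w+L_p)Le^{\alpha t}\big)\|f\|_{BL},
\]
which is exactly the factor needed in the reduction above, and the lemma follows. The computation is essentially routine, so I do not anticipate a genuine obstacle; the only points demanding care are the correct application of the \emph{place-dependent} conditions \eqref{A3}--\eqref{A4} at $S(t,x)$ and $S(t,y)$ rather than at $x,y$, the use of the normalisation $\int_{\Theta}p(\cdot,\theta)\,d\theta=1$ in the sup-norm step, and confirming that $\Pi_{(t)}f\in BL(X)$ so that the duality inequality may legitimately be invoked.
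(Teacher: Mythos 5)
Your proposal is correct and follows essentially the same route as the paper: bound $\|\Pi_{(t)}f\|_{\infty}\le\|f\|_{\infty}$ via the normalisation of $p$, split the Lipschitz increment with the mixed term so that \eqref{A3} and \eqref{A4} apply at $S(t,x)$, $S(t,y)$, invoke \eqref{A2}, and conclude by the duality $|\langle f,\mu\Pi_{(t)}\rangle|=|\langle\Pi_{(t)}f,\mu\rangle|\le\|\Pi_{(t)}f\|_{BL}\|\mu\|_{FM}$. The only cosmetic difference is that you keep the estimate homogeneous in $\|f\|_{BL}$ while the paper normalises to $\|f\|_{BL}\le 1$ from the outset, which is immaterial.
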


\begin{proof}
Let $f\in BL(X)$ be such that $\|f\|_{BL}\leq 1$. 
Obviously, 
$\|\Pi_{(t)}f\|_{\infty}\leq 1$ 
for every $t\in\mathbb{R}_+$. 
Moreover, from conditions  \eqref{A2}, \eqref{A3}, \eqref{A4} it follows that $\Pi_{(t)}f\in BL(X)$, and
\begin{align*}
|\Pi_{(t)}f|_{Lip}\leq (L_w+L_p)L e^{\alpha t}\;\;\;\text{for any}\;\;\;t\in\mathbb{R}_+,
\end{align*}
since 
\begin{align*}
\begin{aligned}
&\left|\Pi_{(t)}f(x)-\Pi_{(t)}f(y)\right|\\
&\qquad=\left|\int_{\Theta}p\left(S(t,x),\theta\right)f\left(w_{\theta}\left(S(t,x)\right)\right)d\theta-\int_{\Theta}p\left(S(t,y),\theta\right)f\left(w_{\theta}\left(S(t,y)\right)\right)d\theta\right|\\
&\qquad\leq 
\left(L_w+L_p\right)\|S(t,x)-S(t,y)\|\\
&\qquad\leq \left(L_w+L_p\right)Le^{\alpha t}\|x-y\|\;\;\;\text{for all}\;\;\;x,y\in X,\;t\in\mathbb{R}_+.
\end{aligned}
\end{align*}
Therefore, for any $\mu\in\mathcal{M}_{sig}(X)$ and any $t\in\mathbb{R}_+$, we obtain
\begin{align*}
\begin{aligned}
\left|\left\langle f,\mu\Pi_{(t)}\right\rangle\right|
=\left|\left\langle \Pi_{(t)}f,\mu\right\rangle\right|
\leq \left\|\Pi_{(t)}f\right\|_{BL}\left\|\mu\right\|_{FM},
\end{aligned}
\end{align*}
which gives the desired conclusion.
\end{proof}

\begin{lemma}\label{lem:mvt}
For any $\lambda_1,\lambda_2>0$, we have
\begin{align*}
\int_0^{\infty}\left|\lambda_1 e^{-\lambda_1 t}-\lambda_2 e^{-\lambda_2 t}\right|dt
\leq \left|\lambda_1-\lambda_2\right|\left(\frac{1}{\lambda_1}+\frac{1}{\lambda_2}\right).
\end{align*}
\end{lemma}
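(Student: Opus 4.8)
The statement is an elementary one-dimensional inequality, so the plan is to reduce the integral to two quantities that can be evaluated in closed form. The only genuine subtlety is the absolute value: the two exponential densities $t\mapsto\lambda_1 e^{-\lambda_1 t}$ and $t\mapsto\lambda_2 e^{-\lambda_2 t}$ cross exactly once (at $t^\ast=\ln(\lambda_2/\lambda_1)/(\lambda_2-\lambda_1)$ when $\lambda_1\neq\lambda_2$), so splitting the integral at $t^\ast$ would produce the exact value $2\bigl(e^{-\lambda_1 t^\ast}-e^{-\lambda_2 t^\ast}\bigr)$, which is awkward to compare with the clean right-hand side. I would therefore avoid locating the crossing point altogether.

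Instead, the key step is a telescoping decomposition of the integrand that separates the dependence on the two parameters:
\begin{align*}
\lambda_1 e^{-\lambda_1 t}-\lambda_2 e^{-\lambda_2 t}
=\lambda_1\bigl(e^{-\lambda_1 t}-e^{-\lambda_2 t}\bigr)+(\lambda_1-\lambda_2)e^{-\lambda_2 t}.
\end{align*}
Applying the triangle inequality and integrating term by term then requires only the two elementary identities $\int_0^\infty e^{-\lambda t}\,dt=1/\lambda$ and $\int_0^\infty\bigl|e^{-\lambda_1 t}-e^{-\lambda_2 t}\bigr|\,dt=\bigl|1/\lambda_1-1/\lambda_2\bigr|$, the latter holding irrespective of the order of $\lambda_1,\lambda_2$ because one of the two exponentials dominates the other on the whole of $[0,\infty)$, so the inner absolute value causes no trouble. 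This yields the one-sided bound $2|\lambda_1-\lambda_2|/\lambda_2$, and the symmetric decomposition built from $(\lambda_1-\lambda_2)e^{-\lambda_1 t}+\lambda_2\bigl(e^{-\lambda_1 t}-e^{-\lambda_2 t}\bigr)$ yields $2|\lambda_1-\lambda_2|/\lambda_1$ in exactly the same way.

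Finally, to recover the precise symmetric constant in the statement, I would simply average the two estimates just obtained: since each is a valid upper bound for the same integral, so is their arithmetic mean, and
\begin{align*}
\int_0^\infty\bigl|\lambda_1 e^{-\lambda_1 t}-\lambda_2 e^{-\lambda_2 t}\bigr|\,dt
\leq\frac12\left(\frac{2|\lambda_1-\lambda_2|}{\lambda_1}+\frac{2|\lambda_1-\lambda_2|}{\lambda_2}\right)
=|\lambda_1-\lambda_2|\left(\frac1{\lambda_1}+\frac1{\lambda_2}\right).
\end{align*}
Alternatively, one may invoke the symmetry of the right-hand side to assume $\lambda_1\le\lambda_2$ and then relax $2/\lambda_2=1/\lambda_2+1/\lambda_2\le 1/\lambda_1+1/\lambda_2$. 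I do not anticipate any real obstacle here: the whole difficulty is the bookkeeping needed to turn the natural (non-symmetric) estimate into the symmetric constant actually claimed, and the averaging trick disposes of it in one line.
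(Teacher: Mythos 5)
Your proof is correct, and its first move coincides exactly with the paper's: after assuming w.l.o.g. $\lambda_1<\lambda_2$, the paper also splits $\left|\lambda_1 e^{-\lambda_1 t}-\lambda_2 e^{-\lambda_2 t}\right|\leq \lambda_1\left|e^{-\lambda_1 t}-e^{-\lambda_2 t}\right|+(\lambda_2-\lambda_1)e^{-\lambda_2 t}$, which is your telescoping decomposition plus the triangle inequality. Where you diverge is in the treatment of the middle term. The paper does \emph{not} evaluate $\int_0^\infty\left|e^{-\lambda_1 t}-e^{-\lambda_2 t}\right|dt$ in closed form; it writes the integrand as $e^{-\lambda_1 t}\left(1-e^{-(\lambda_2-\lambda_1)t}\right)$, applies the elementary estimate $1-e^{-x}\leq x$, and uses $\int_0^\infty t e^{-\lambda_1 t}\,dt=1/\lambda_1^2$ to get $(\lambda_2-\lambda_1)/\lambda_1$ for that term, landing on the symmetric constant $\left|\lambda_1-\lambda_2\right|\left(1/\lambda_1+1/\lambda_2\right)$ in a single pass, with no symmetrization step needed. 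You instead compute the middle term exactly, which is legitimate for precisely the reason you give (one exponential dominates the other on all of $[0,\infty)$, so the inner absolute value never changes sign), and this yields the two asymmetric bounds $2\left|\lambda_1-\lambda_2\right|/\lambda_2$ and $2\left|\lambda_1-\lambda_2\right|/\lambda_1$; your averaging step is logically sound, since the arithmetic mean of two valid upper bounds for the same quantity is again a valid upper bound, and your alternative finish via the symmetry reduction $2/\lambda_2\leq 1/\lambda_1+1/\lambda_2$ works equally well. What your route buys: the intermediate estimate $2\left|\lambda_1-\lambda_2\right|/\max\{\lambda_1,\lambda_2\}$ is strictly sharper than the stated inequality whenever $\lambda_1\neq\lambda_2$, though this gain is immaterial for the paper's use of the lemma (in Lemma \ref{lem:jointly_cnt} and Theorem \ref{thm:main} only the vanishing of the bound as $\lambda_1\to\lambda_2$ matters). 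What the paper's route buys: it reaches the stated symmetric constant directly, avoiding both the exact evaluation and the final averaging bookkeeping that you correctly identify as the only real friction in your argument.
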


\begin{proof}
Without loss of generality, we may assume that $\lambda_1<\lambda_2$. 
Since $1-e^{-x}\leq x$ for every $x\in\mathbb{R}$, we obtain
\begin{align*}
\int_0^{\infty}\left|\lambda_1 e^{-\lambda_1 t}-\lambda_2 e^{-\lambda_2 t}\right|dt
&\leq \lambda_1\int_0^{\infty}\left|e^{-\lambda_1 t}-e^{-\lambda_2 t}\right|dt+\left(\lambda_2-\lambda_1\right)\int_0^{\infty}e^{-\lambda_2 t}dt\\
&\leq \lambda_1\int_0^{\infty}e^{-\lambda_1t}\left(1-e^{-(\lambda_2-\lambda_1)t}\right)dt+\frac{\lambda_2-\lambda_1}{\lambda_2}\\
&\leq \lambda_1\left(\lambda_2-\lambda_1\right)\int_0^{\infty}e^{-\lambda_1 t}t\,dt+\frac{\left(\lambda_2-\lambda_1\right)}{\lambda_2}\\
&= \left|\lambda_1-\lambda_2\right|\left(\frac{1}{\lambda_1}+\frac{1}{\lambda_2}\right),
\end{align*}
which completes the proof.
\end{proof}

\begin{lemma}\label{lem:jointly_cnt}
Let $\mathcal{M}_{sig}(X)$ be endowed with the topology induced by the norm $\|\cdot\|_{FM}$, and suppose that conditions \eqref{A2}-\eqref{A4} hold with constants satisfying \eqref{constants_ass}. Then, the map 
\hbox{$(\bar{\alpha},\infty)\times \mathcal{M}_{sig}(X)\ni(\lambda,\mu)\mapsto \mu P_{\lambda}\in\mathcal{M}_{sig}(X),$} 
where $\bar{\alpha}$ is given by \eqref{def:bar_alpha}, 
is jointly continuous.
\end{lemma}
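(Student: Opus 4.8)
The plan is to establish continuity at an arbitrary fixed point $(\lambda_0,\mu_0)\in(\bar\alpha,\infty)\times\mathcal{M}_{sig}(X)$, working throughout with $\lambda$ confined to a compact interval $[\lambda_0-\eta,\lambda_0+\eta]\subset(\bar\alpha,\infty)$, which keeps every rate bounded and bounded away from $\bar\alpha$. The starting point is the triangle inequality
\begin{align*}
\left\|\mu P_{\lambda}-\mu_0 P_{\lambda_0}\right\|_{FM}\leq \left\|(\mu-\mu_0) P_{\lambda}\right\|_{FM}+\left\|\mu_0 P_{\lambda}-\mu_0 P_{\lambda_0}\right\|_{FM},
\end{align*}
which separates the dependence on the measure from the dependence on the rate. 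For each summand I would reduce the $\|\cdot\|_{FM}$-norm of a $P_{\lambda}$-image to an integral in $t$: testing against $f\in BL(X)$ with $\|f\|_{BL}\leq 1$, the identity $\langle f,\nu P_{\lambda}\rangle=\int_0^\infty\lambda e^{-\lambda t}\langle f,\nu\Pi_{(t)}\rangle\,dt$ (which follows from \eqref{eq:P_lambda_P_phi} together with the dual relation and Fubini) yields, after taking the supremum over $f$, the basic inequality $\|\nu P_{\lambda}\|_{FM}\leq\int_0^\infty\lambda e^{-\lambda t}\|\nu\Pi_{(t)}\|_{FM}\,dt$.

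For the measure term I would apply this with $\nu=\mu-\mu_0$ and invoke Lemma \ref{lem:equicnt}, obtaining
\begin{align*}
\left\|(\mu-\mu_0)P_{\lambda}\right\|_{FM}\leq\left(\int_0^\infty\lambda e^{-\lambda t}\left(1+(L_w+L_p)Le^{\alpha t}\right)dt\right)\left\|\mu-\mu_0\right\|_{FM}.
\end{align*}
The integral evaluates to $1+(L_w+L_p)L\,\lambda/(\lambda-\alpha)$, which is finite because $\lambda>\bar\alpha\geq\alpha$ and is bounded by some constant $C_0$ uniformly over the chosen interval; hence this summand is at most $C_0\|\mu-\mu_0\|_{FM}$ and is made small by taking $\mu$ close to $\mu_0$.

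For the rate term I would subtract the two integral representations of $\mu_0P_{\lambda}$ and $\mu_0P_{\lambda_0}$ to express their difference as an integral against the difference of densities, so that the basic inequality and Lemma \ref{lem:equicnt} give
\begin{align*}
\left\|\mu_0 P_{\lambda}-\mu_0 P_{\lambda_0}\right\|_{FM}\leq\left\|\mu_0\right\|_{FM}\int_0^\infty\left|\lambda e^{-\lambda t}-\lambda_0 e^{-\lambda_0 t}\right|\left(1+(L_w+L_p)Le^{\alpha t}\right)dt.
\end{align*}
I would split this integral into its constant part, which is exactly the quantity bounded by Lemma \ref{lem:mvt} and so vanishes as $\lambda\to\lambda_0$, and its $e^{\alpha t}$ part. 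For the latter, absorbing $e^{\alpha t}$ into the exponentials rewrites the integrand as $|\lambda e^{-(\lambda-\alpha)t}-\lambda_0 e^{-(\lambda_0-\alpha)t}|$; writing $\lambda=(\lambda-\alpha)+\alpha$ and $\lambda_0=(\lambda_0-\alpha)+\alpha$ splits this into $\int_0^\infty|(\lambda-\alpha)e^{-(\lambda-\alpha)t}-(\lambda_0-\alpha)e^{-(\lambda_0-\alpha)t}|\,dt$, bounded by Lemma \ref{lem:mvt} applied to the shifted rates $\lambda-\alpha,\lambda_0-\alpha>0$, plus $|\alpha|\int_0^\infty|e^{-(\lambda-\alpha)t}-e^{-(\lambda_0-\alpha)t}|\,dt=|\alpha|\,|(\lambda-\alpha)^{-1}-(\lambda_0-\alpha)^{-1}|$. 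All of these bounds are $O(|\lambda-\lambda_0|)$ and tend to $0$, so the rate term vanishes as $\lambda\to\lambda_0$. Given $\varepsilon>0$, choosing $\lambda$ near $\lambda_0$ to control the rate term and then $\mu$ near $\mu_0$ to control the measure term completes the argument.

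The step I expect to be the main obstacle is the $e^{\alpha t}$ part of the rate term when $\alpha>0$: there $\|\mu_0\Pi_{(t)}\|_{FM}$ grows exponentially, so Lemma \ref{lem:mvt} cannot be applied directly, and the decisive device is the shift of the rates by $\alpha$, legitimate precisely because $\lambda>\bar\alpha\geq\alpha$ keeps the shifted rates positive and all integrals convergent. A minor point worth flagging is that Lemma \ref{lem:Bochner} carries the moment hypothesis $\langle J,|\mu|\rangle<\infty$, which need not hold for a general $\mu\in\mathcal{M}_{sig}(X)$; this is why I would derive the basic integral inequality from the weak identity \eqref{eq:P_lambda_P_phi} and Fubini rather than from the Bochner representation itself, so that the estimates are valid for every $\mu$.
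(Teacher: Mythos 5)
Your proof is correct, and its skeleton --- reduce $\mu P_{\lambda}$ to the integral representation $\int_0^{\infty}\lambda e^{-\lambda t}\mu\Pi_{(t)}\,dt$, separate the dependence on $\mu$ from that on $\lambda$, and control both pieces via Lemmas \ref{lem:equicnt} and \ref{lem:mvt} --- is the same as the paper's. The substantive difference is how the cross term is distributed. The paper decomposes
\begin{align*}
\lambda_1 e^{-\lambda_1 t}\mu_1\Pi_{(t)}-\lambda_2 e^{-\lambda_2 t}\mu_2\Pi_{(t)}
=\left(\lambda_1 e^{-\lambda_1 t}-\lambda_2 e^{-\lambda_2 t}\right)\mu_1\Pi_{(t)}
+\lambda_2 e^{-\lambda_2 t}\left(\mu_1-\mu_2\right)\Pi_{(t)},
\end{align*}
pairing the density difference with the $t$-uniform bound $\|\mu_1\Pi_{(t)}\|_{FM}\leq\|\mu_1\|_{TV}$, so that Lemma \ref{lem:mvt} applies verbatim and the $e^{\alpha t}$ growth from Lemma \ref{lem:equicnt} only ever meets the integrable density $\lambda_2 e^{-\lambda_2 t}$, where $\int_0^{\infty}\lambda_2 e^{-\lambda_2 t}e^{\alpha t}\,dt=\lambda_2/(\lambda_2-\alpha)<\infty$. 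You instead pair the density difference with the $e^{\alpha t}$-growing bound of Lemma \ref{lem:equicnt}, which forces your additional rate-shift device for $\alpha>0$ (replacing the rates by $\lambda-\alpha$, $\lambda_0-\alpha$ and computing the leftover term $|\alpha|\,|(\lambda-\alpha)^{-1}-(\lambda_0-\alpha)^{-1}|$ exactly); this is valid --- the shifted rates stay positive precisely because $\lambda,\lambda_0>\bar{\alpha}\geq\alpha$ --- but it is a self-imposed complication that the paper's pairing avoids at no cost. On the other hand, your final remark flags a genuine imprecision in the paper: its proof invokes Lemma \ref{lem:Bochner} for arbitrary $\mu_1,\mu_2\in\mathcal{M}_{sig}(X)$, even though that lemma is stated under the moment hypothesis $\langle J,|\mu|\rangle<\infty$ (in fact that hypothesis is used there only to get strong measurability, which holds regardless, since $t\mapsto\Pi_{(t)}f(x)$ is continuous for each $x$ by \eqref{A3}--\eqref{A4} and dominated convergence); your derivation of the scalar inequality $\|\nu P_{\lambda}\|_{FM}\leq\int_0^{\infty}\lambda e^{-\lambda t}\|\nu\Pi_{(t)}\|_{FM}\,dt$ directly from \eqref{eq:P_lambda_P_phi} and Fubini sidesteps this cleanly and makes your argument valid for every signed measure, which is a small improvement over the paper's own write-up.
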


\begin{proof}
Let $\lambda_1,\lambda_2>\bar{\alpha}$ and $\mu_1,\mu_2\in\mathcal{M}_{sig}(X)$. Note that, due to Lemma \ref{lem:Bochner}, we have
\begin{align*}
&\left\|\mu_1 P_{\lambda_1}-\mu_2 P_{\lambda_2}\right\|_{FM}
=\left\|\int_0^{\infty}\left(\lambda_1 e^{-\lambda_1 t}\mu_1\Pi_{(t)}-\lambda_2 e^{-\lambda_2 t}\mu_2\Pi_{(t)}\right)dt\right\|_{FM}\\
&\qquad\qquad\leq \left\|\mu_1\right\|_{TV}\int_0^{\infty}\left|\lambda_1 e^{-\lambda_1 t}-\lambda_2 e^{-\lambda_2 t}\right|\,dt+\int_0^{\infty}\lambda_2 e^{-\lambda_2 t}\left\|\mu_1\Pi_{(t)}-\mu_2\Pi_{(t)}\right\|_{FM}dt,
\end{align*}
where the inequality follows from statement (i) of Theorem \ref{thm:b_int} and the fact that \linebreak\hbox{$\|\mu_1\Pi_{(t)}\|_{FM}\leq\|\mu_1\|_{TV}$}. 
Further, applying Lemmas \ref{lem:equicnt} and \ref{lem:mvt}, we obtain
\begin{align*}
\begin{aligned}
\left\|\mu_1 P_{\lambda_1}-\mu_2 P_{\lambda_2}\right\|_{FM}
\leq\,\left\|\mu_1\right\|_{TV}\left|\lambda_1-\lambda_2\right|\left(\frac{1}{\lambda_1}+\frac{1}{\lambda_2}\right)
+\left\|\mu_1-\mu_2\right\|_{FM}
\left(1+\frac{\left(L_w+L_p\right)L\lambda_2}{\lambda_2-\alpha}\right).
\end{aligned}
\end{align*}
We now see that $\|\mu_1 P_{\lambda_1} - \mu_2 P_{\lambda_2} \|_{FM} \to 0$, as $|\lambda_1-\lambda_2|\to 0$ and $\|\mu_1-\mu_2\|_{FM}\to 0$, which completes the proof.
\end{proof}

Suppose that \eqref{A1}, \eqref{A2} and \eqref{A3}-\eqref{A5} hold with constants satisfying \eqref{constants_ass}. 
Then, according to \hbox{\cite[Theorem 4.1]{dawid}} (or \hbox{\cite[Theorem 4.1]{asia}}), for any $\lambda\in[\lambda_{\min},\lambda_{\max}]$, 
there exists a~unique invariant measure $\mu_{\lambda}^*\in\mathcal{M}_1(X)$ for $P_{\lambda}$ such that
\begin{align}\label{eq:conv_lambda}
\left\|\mu P^n_{\lambda}-\mu_{\lambda}^*\right\|_{FM}\leq C_{\lambda,\mu} \,q_{\lambda}^n\;\;\;\text{for any}\;\;\;\mu\in\mathcal{M}_{1,1}(X),
\end{align}
where $q_{\lambda}\in(0,1)$ and $C_{\lambda,\mu}\in\mathbb{R}_+$ are some constants,  depending on the parameter $\lambda$ and the initial measure~$\mu$.

\begin{lemma}\label{lem:uniform_conv}
Suppose that conditions \eqref{A1}, \eqref{A2} and \eqref{A3}-\eqref{A5} hold with constants satisfying \eqref{constants_ass}, and, for any $\lambda \in [\lambda_{min}, \lambda_{max}]$, let $\mu_{\lambda}^*$ stand for the unique invariant probability measure measure of $P_{\lambda}$. Then, the convergence $\|\mu P^n_{\lambda} - \mu_{\lambda}^*\|_{FM}\to 0$ (as $n\to \infty$) is uniform with respect to $\lambda$, whenever \hbox{$\mu\in\mathcal{M}_{1,1}(X)$}.
\end{lemma}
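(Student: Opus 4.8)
The plan is to promote the pointwise-in-$\lambda$ bound \eqref{eq:conv_lambda} into a uniform one. Note first that the two facts already at our disposal, namely the joint continuity of Lemma \ref{lem:jointly_cnt} and the estimate \eqref{eq:conv_lambda} used as a black box, do not by themselves suffice: Lemma \ref{lem:jointly_cnt} controls $\lambda\mapsto\mu P_\lambda^n$ only for a fixed finite $n$, whereas here one faces the simultaneous limits $n\to\infty$ and $\lambda\to\lambda_0$, and the continuity of the fixed point $\lambda\mapsto\mu_\lambda^*$ is not yet available (indeed it is the very object the paper is after). Accordingly, I would aim to show that the constants in \eqref{eq:conv_lambda} can be chosen independently of $\lambda$: if there are $\bar q\in(0,1)$ and $\bar C<\infty$ (the latter allowed to depend on $\mu$) with $\|\mu P_\lambda^n-\mu_\lambda^*\|_{FM}\le \bar C\,\bar q^{\,n}$ for every $n\in\mathbb{N}$ and every $\lambda\in[\lambda_{\min},\lambda_{\max}]$, then $\sup_\lambda\|\mu P_\lambda^n-\mu_\lambda^*\|_{FM}\le \bar C\bar q^n\to 0$, which is exactly the asserted uniformity.

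To produce such constants I would revisit the proof of \cite[Theorem 4.1]{dawid} that yields \eqref{eq:conv_lambda} and track the $\lambda$-dependence of every quantity entering it. That proof rests on a drift inequality $P_\lambda V\le a_\lambda V+b_\lambda$ for $V=\|\cdot-\bar x\|$ together with a coupling/minorization estimate on the sublevel sets of $V$. Combining \eqref{A2} and \eqref{A3} through the representation \eqref{eq:P_lambda_P_phi}, the drift coefficient evaluates to $a_\lambda=LL_w\,\lambda/(\lambda-\alpha)$, and a direct computation shows that $a_\lambda<1$ is equivalent to \eqref{constants_ass}; since $\lambda\mapsto a_\lambda$ is continuous, it attains a maximum $<1$ on the compact interval. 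The additive constant $b_\lambda$ is dominated, uniformly for $\lambda\ge\lambda_{\min}$, by the finite supremum in \eqref{A1} (because $\lambda e^{-\lambda t}\le\lambda_{\max}e^{-\lambda_{\min}t}$ for such $\lambda$ and all $t\ge 0$), while the minorization constant $\bar p$ of \eqref{A5} does not involve $\lambda$ at all, and the contraction-on-bounded-sets estimate again only sees $\lambda$ through the $\lambda$-continuous integral $\int_0^\infty\lambda e^{-\lambda t}Le^{\alpha t}\,dt$.

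The rate $q_\lambda$ and prefactor $C_{\lambda,\mu}$ delivered by that theorem are explicit expressions built from $a_\lambda$, $b_\lambda$, $\bar p$, the geometric factor $1/(1-a_\lambda)$, and the initial moment $\langle V,\mu\rangle$, the last of which is finite precisely because $\mu\in\mathcal{M}_{1,1}(X)$. Each is therefore a continuous function of $\lambda$ on $[\lambda_{\min},\lambda_{\max}]$ for fixed $\mu$, so continuity on a compact interval gives $\bar q:=\sup_\lambda q_\lambda<1$ and $\bar C:=\sup_\lambda C_{\lambda,\mu}<\infty$, finishing the argument. I expect the main obstacle to lie not in the rate but in the bookkeeping for $C_{\lambda,\mu}$: it aggregates the drift constants, the geometric sum $1/(1-a_\lambda)$, and the moment of $\mu$, so one must confirm that nothing degenerates as $\lambda$ tends to the endpoints, in particular that $a_\lambda$ stays bounded away from $1$ there, which is exactly what \eqref{constants_ass}, assumed on the closed interval $[\lambda_{\min},\lambda_{\max}]$, secures.
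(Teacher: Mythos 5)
Your proposal is correct in strategy and its concrete computations check out: the drift coefficient through the representation \eqref{eq:P_lambda_P_phi} is indeed $a_\lambda = LL_w\,\lambda/(\lambda-\alpha)$, the equivalence $a_\lambda<1 \Leftrightarrow LL_w+\alpha/\lambda<1$ is exactly \eqref{constants_ass}, and the bound $\lambda e^{-\lambda t}\leq \lambda_{\max}e^{-\lambda_{\min}t}$ does make the \eqref{A1}-type additive constant uniform on the interval. However, you take a genuinely different route from the paper. You open up the proof of \cite[Theorem 4.1]{dawid} and argue that $q_\lambda$ and $C_{\lambda,\mu}$ are continuous in $\lambda$, hence uniformly controlled by compactness of $[\lambda_{\min},\lambda_{\max}]$. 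The paper never looks inside that theorem: it rescales time, substituting $t=\lambda_{\max}\lambda^{-1}u$ when $\alpha\leq 0$ (and $t=\lambda_{\min}\lambda^{-1}u$ when $\alpha>0$), which identifies $P_\lambda$ with the one-step operator $\widetilde{P}_{\lambda_{\max}}$ of a modified system having the \emph{fixed} jump intensity $\lambda_{\max}$ and the reparametrized flow $S_\lambda(u,x)=S\left(\lambda_{\max}\lambda^{-1}u,x\right)$; the sign dichotomy in $\alpha$ is precisely what guarantees $e^{\alpha\lambda_{\max}\lambda^{-1}t}\leq e^{\alpha t}$, so the whole family of modified systems satisfies the standing assumptions with the \emph{same} constants, and a single application of \cite[Theorem 4.1]{dawid} yields $q$ and $C$ free of $\lambda$. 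Note that both arguments rest on the meta-fact that the constants delivered by that theorem are determined by the hypothesis constants alone; the paper needs it only in the qualitative form (equal hypothesis constants give equal ergodicity constants), whereas your route needs the stronger quantitative form --- explicit, continuous dependence of $q_\lambda$ and $C_{\lambda,\mu}$ on $a_\lambda$, $b_\lambda$, $\overline{p}$ and $\langle\|\cdot-\bar{x}\|,\mu\rangle$ --- which is exactly the bookkeeping you flag and which must actually be confirmed against the coupling construction in \cite{dawid} before your proof is complete. What your approach buys in exchange is robustness and explicitness: it produces a uniform rate without requiring the assumptions to be invariant under time rescaling, so it would survive modifications (e.g.\ of the semiflow bound \eqref{A2}) under which the paper's substitution trick breaks down.
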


\begin{proof}
In view of \cite[Theorem 4.1]{dawid}, 
it is sufficient to prove that the convergence is uniform with respect to $\lambda$.

Let us consider the case where $\alpha\leq 0$. 
Choose an arbitrary $\lambda\in[\lambda_{\min},\lambda_{\max}]$, and note that, substituting $t=\lambda_{\max}\lambda^{-1} u$, we obtain
\begin{align*}
\mu P_{\lambda}(A)&=\int_X\int_0^{\infty}\lambda e^{-\lambda t}\int_{\Theta}p\left(S(t,x),\theta\right)\,\mathbbm{1}_A\left(w_{\theta}\left(S(t,x)\right)\right)\,d\theta\,dt\,\mu(dx)\\
&=\int_X\int_0^{\infty}\lambda_{\max} e^{-\lambda_{\max} u}\int_{\Theta}p\left(S_{\lambda}(u,x),\theta\right)\,\mathbbm{1}_A\left(w_{\theta}\left(S_{\lambda}(u,x)\right)\right)\,d\theta\,du\,\mu(dx)
\end{align*}
for any $\mu\in\mathcal{M}_1(X)$, $A\in\mathcal{B}_X$, where 
\begin{align*}
S_{\lambda}(u,x)=S\left(\frac{\lambda_{\max}}{\lambda} u,x\right)\;\;\;\text{for}\;\;\;u\in\mathbb{R}_+,\;x\in X.
\end{align*}
Moreover, the semi-flow $S_{\lambda}$ enjoys condition \eqref{A2}, since, for any $t\in\mathbb{R}_+$ and any $x,y\in X$, we have
\begin{align*}
\left\|S_{\lambda}(t,x)-S_{\lambda}(t,y)\right\|\leq Le^{\alpha\lambda_{\max}\lambda^{-1} t}\left\|x-y\right\|\leq L e^{\alpha t}\left\|x-y\right\|.
\end{align*}
Hence, we can write $P_{\lambda}=\widetilde{P}_{\lambda_{\max}}$, where $\widetilde{P}_{\lambda_{\max}}$ stands for the Markov operator corresponding to the instance of our system with the jump intensity $\lambda_{\max}$ and the flow $S_{\lambda}$ in place of $S$. Taking into account the above observation, it is evident that such a modified system still satisfies conditions \eqref{constants_ass}- \eqref{A2} and \eqref{A3}-\eqref{A5}. Consequently, keeping in mind \eqref{eq:conv_lambda}, we can conclude that there exist $q_{\lambda_{\max}}\in(0,1)$ and $C_{\lambda_{\max},\mu} \in\mathbb{R}_+$ such that, for any $\lambda\in[\lambda_{\min},\lambda_{\max}]$, we have
\begin{align*}
\left\|\mu P^n_{\lambda}-\mu_{\lambda}^*\right\|_{FM}
=\left\|\mu\widetilde{P}_{\lambda_{\max}}^n - \mu_{\lambda}^* \right\|_{FM} 
\leq C_{\lambda_{\max},\mu} \,q_{\lambda_{\max}}^n\;\;\;\text{for any}\;\;\;\mu\in\mathcal{M}_{1,1}(X).
\end{align*}

In the case where $\alpha>0$, the proof is similar to the one conducted above (except that this time we substitute $t:=\lambda_{\min}\lambda^{-1} u$), so we omit~it.
\end{proof}

\section{Main results}\label{sec:main}

Before we formulate and prove the main theorems of this paper, let us first quote the result provided in \cite[Theorem 7.11]{rudin}.

\begin{lemma}\label{lem:rudin}
Let $(Y,\varrho)$ and $(Z,d)$ be some metric spaces, and let $U$ be an arbitrary subset of $Y$. Suppose that $(f_n)_{n\in\mathbb{N}_0}$ is a sequence of functions, defined on $E$, with values in $Z$, which converges uniformly on $E$ to some function $f:E\to Z$. Further, let $\bar{y}$ be a limit point of $E$, and assume that
\begin{align*}
a_n:=\lim_{y\to \bar{y}}f_n(y)
\end{align*}
exists and is finite for every ${n\in\mathbb{N}_0}$. 
Then, $f$ has a finite limit at $\bar{y}$, and the sequence $(a_n)_{n\in\mathbb{N}_0}$ converges to it, that is,
\begin{align*}
\lim_{n\to\infty}\left(\lim_{y\to \bar{y}}f_n(y)\right)=\lim_{y\to \bar{y}}\left(\lim_{n\to\infty}f_n(y)\right).
\end{align*}
\end{lemma}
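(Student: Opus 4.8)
The plan is to run the classical three-$\varepsilon$ argument for interchanging limits, in two stages: first show that the sequence $(a_n)$ converges in $(Z,d)$, and then identify its limit with $\lim_{y\to\bar{y}}f(y)$.

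First I would establish that $(a_n)_{n\in\mathbb{N}_0}$ is a Cauchy sequence. Uniform convergence of $(f_n)$ to $f$ on $E$ yields the uniform Cauchy criterion: for every $\varepsilon>0$ there is $N$ such that $d(f_n(y),f_m(y))\le\varepsilon$ for all $n,m\ge N$ and all $y\in E$. Fixing $n,m\ge N$ and letting $y\to\bar{y}$ — which is legitimate because $\bar{y}$ is a limit point of $E$, the limits $a_n=\lim_{y\to\bar{y}}f_n(y)$ and $a_m=\lim_{y\to\bar{y}}f_m(y)$ exist by hypothesis, and the metric $d$ is jointly continuous — gives $d(a_n,a_m)\le\varepsilon$. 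Hence $(a_n)$ is Cauchy, and, the target space being complete (as $Z=\mathbb{R}$ in the cited reference, and a Banach space of measures in every application we make), it converges to some $a\in Z$.

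Second, I would show $\lim_{y\to\bar{y}}f(y)=a$ by the triangle-inequality splitting $d(f(y),a)\le d(f(y),f_n(y))+d(f_n(y),a_n)+d(a_n,a)$. Given $\varepsilon>0$, I first pick $n$ so large that the first term is below $\varepsilon/3$ for all $y$ simultaneously (uniform convergence $f_n\to f$) and the third term is below $\varepsilon/3$ (convergence $a_n\to a$); then, holding this $n$ fixed, I use $a_n=\lim_{y\to\bar{y}}f_n(y)$ to find a punctured neighbourhood of $\bar{y}$ on which the middle term is below $\varepsilon/3$. Combining the three bounds yields $d(f(y),a)<\varepsilon$ for $y$ near $\bar{y}$, so $f$ has the finite limit $a$ at $\bar{y}$, and $a=\lim_{n\to\infty}a_n$, which is precisely the asserted equality.

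The only genuinely delicate point is the first stage: one must justify passing to the limit $y\to\bar{y}$ inside the uniform-Cauchy estimate and then guarantee that the resulting Cauchy sequence actually converges, so the completeness of $Z$ is the real hypothesis doing the work (it is automatic in the cited setting). Everything afterwards is routine $\varepsilon/3$ bookkeeping, the one point of care being to fix $n$ \emph{before} shrinking the neighbourhood of $\bar{y}$, so that the two limiting processes do not become entangled.
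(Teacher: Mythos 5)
Your proof is correct and is essentially the same argument as the paper's source: the paper gives no proof of its own but cites Rudin's Theorem 7.11, whose proof is exactly your two-stage scheme --- the uniform Cauchy criterion passed to the limit $y\to\bar{y}$ to show $(a_n)$ is Cauchy, then the $\varepsilon/3$ splitting with $n$ fixed \emph{before} shrinking the neighbourhood of $\bar{y}$. You are also right that completeness of $Z$ is tacitly required (the statement as transplanted from Rudin's real-valued setting to general metric spaces omits it, and without it the lemma fails), but it holds in every application made in the paper, where $Z$ is $\mathbb{R}$ or a Banach space of measures.
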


We are now in a position to state the result concerning the continuous dependence of an invariant measure $\mu^*_{\lambda}$ of $P_{\lambda}$ on the parameter $\lambda$. In the proof we will refer to the lemmas provided in Section \ref{sec:properties}, as well as to Lemma \ref{lem:rudin}.

\begin{theorem}\label{thm:mu}
Suppose that conditions \eqref{A1}-\eqref{A5} hold with constants satisfying \eqref{constants_ass}, and, for any $\lambda \in [\lambda_{min}, \lambda_{max}]$, let $\mu_{\lambda}^*$ stand for the unique invariant probability measure measure of $P_{\lambda}$. Then, for every $\bar{\lambda}\in[\lambda_{\min},\lambda_{\max}]$, we have
$\mu_{\lambda}^* \stackrel{w}{\to} \mu_{\bar{\lambda}}^*$, as $\lambda\to\bar{\lambda}$.
\end{theorem}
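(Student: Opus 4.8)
The plan is to obtain the assertion as an interchange of the double limit $\lim_{\lambda\to\bar\lambda}\lim_{n\to\infty}$ against $\lim_{n\to\infty}\lim_{\lambda\to\bar\lambda}$, carried out by means of Lemma~\ref{lem:rudin}. Since all the measures in play are probabilities, convergence in the Fortet--Mourier norm coincides with weak convergence (as recalled in Section~\ref{sec:1}), so it suffices to prove $\|\mu_\lambda^*-\mu_{\bar\lambda}^*\|_{FM}\to 0$ as $\lambda\to\bar\lambda$. Fix the reference point $\bar x$ of the standing assumptions and put $\mu:=\delta_{\bar x}$, which lies in $\mathcal{M}_{1,1}(X)$ (a Dirac measure is a probability measure with finite first moment). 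Define $f_n(\lambda):=\mu P_\lambda^n$ and $f(\lambda):=\mu_\lambda^*$, regarded as maps from $[\lambda_{\min},\lambda_{\max}]\subset(\bar\alpha,\infty)$ into the complete space $\bigl(\text{cl}\,\mathcal{M}_{sig}(X),\|\cdot\|_{FM}\bigr)$.

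First I would record the uniform-convergence hypothesis of Lemma~\ref{lem:rudin}. This is exactly the content of Lemma~\ref{lem:uniform_conv}: for $\mu\in\mathcal{M}_{1,1}(X)$ the convergence $\|\mu P_\lambda^n-\mu_\lambda^*\|_{FM}\to 0$ holds uniformly over $\lambda\in[\lambda_{\min},\lambda_{\max}]$, i.e.\ $f_n\to f$ uniformly, with $\bar\lambda$ serving as the limit point of the (nondegenerate) parameter interval.

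Next I would verify the pointwise hypothesis, namely that each $f_n$ has a limit at $\bar\lambda$. I claim that $\lambda\mapsto\mu P_\lambda^n$ is continuous for every $n$, so that $a_n:=\lim_{\lambda\to\bar\lambda}f_n(\lambda)=\mu P_{\bar\lambda}^n$. This follows by induction on $n$ from the joint continuity of Lemma~\ref{lem:jointly_cnt}: for $n=1$ the map $\lambda\mapsto\mu P_\lambda$ is continuous, being the restriction of a jointly continuous map to a fixed first argument; for the inductive step one composes the continuous map $\lambda\mapsto(\lambda,\mu P_\lambda^{n})$ with the jointly continuous map $(\lambda,\nu)\mapsto\nu P_\lambda$ to obtain $\lambda\mapsto(\mu P_\lambda^{n})P_\lambda=\mu P_\lambda^{n+1}$. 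Note that Lemma~\ref{lem:jointly_cnt} is stated on all of $\mathcal{M}_{sig}(X)$, so no moment or positivity constraint is needed along the iteration.

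With both hypotheses in hand, Lemma~\ref{lem:rudin} yields that $f$ has a limit at $\bar\lambda$ and that $\lim_{\lambda\to\bar\lambda}\mu_\lambda^*=\lim_{n\to\infty}a_n=\lim_{n\to\infty}\mu P_{\bar\lambda}^n$. The right-hand side equals $\mu_{\bar\lambda}^*$ by the exponential convergence \eqref{eq:conv_lambda} applied at the single parameter $\bar\lambda$. Hence $\|\mu_\lambda^*-\mu_{\bar\lambda}^*\|_{FM}\to 0$, and passing back to weak convergence completes the argument. I expect the genuinely substantive work to reside in the two preparatory lemmas, above all in the \emph{uniformity} in $\lambda$ of Lemma~\ref{lem:uniform_conv}; within the present proof the only points demanding care are that the interchange be performed in the complete space $\text{cl}\,\mathcal{M}_{sig}(X)$ so that Lemma~\ref{lem:rudin} is applicable, and that the identity $\lim_{n}a_n=\mu_{\bar\lambda}^*$ be drawn from the fixed-parameter convergence \eqref{eq:conv_lambda} rather than merely from abstract completeness.
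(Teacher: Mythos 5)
Your proposal is correct and follows essentially the same route as the paper: the interchange of the limits $n\to\infty$ and $\lambda\to\bar{\lambda}$ via Lemma~\ref{lem:rudin}, with uniformity supplied by Lemma~\ref{lem:uniform_conv} and the pointwise limits by the joint continuity of Lemma~\ref{lem:jointly_cnt}. If anything, you are slightly more careful than the paper, which invokes Lemma~\ref{lem:uniform_conv} for all $\mu\in\mathcal{M}_1(X)$ even though that lemma is stated for $\mu\in\mathcal{M}_{1,1}(X)$; your choice $\mu=\delta_{\bar{x}}$, together with the explicit induction for the continuity of $\lambda\mapsto\mu P_{\lambda}^n$ and performing the interchange in $\bigl(\text{cl}\,\mathcal{M}_{sig}(X),\|\cdot\|_{FM}\bigr)$ rather than scalar-wise against each $f\in BC(X)$, tidies up these details without changing the argument.
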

\begin{proof}
Let  $\bar{\lambda}\in[\lambda_{\min},\lambda_{\max}]$. Due to Lemma \ref{lem:uniform_conv}, we know that, for every $\mu\in\mathcal{M}_1(X)$ and any  \hbox{$\lambda\in [\lambda_{\min}, \lambda_{\max}]$}, the sequence $(\mu P_{\lambda}^n)_{n\in\mathbb{N}_0}$ converges weakly to $\mu^*_{\lambda}$, as $n\to\infty$, and the convergence is uniform with respect to $\lambda$.

Further, Lemma \ref{lem:jointly_cnt} yields that $(\bar{\alpha},\infty)\times \mathcal{M}_1(X)\ni(\lambda,\mu)\mapsto \mu P_{\lambda}\in\mathcal{M}_1(X)$ is jointly continuous, provided that $\mathcal{M}_1(X)$ is equipped with the topology induced by the Fortet-Mourier norm. Hence, for any $\mu\in\mathcal{M}_1(X)$ and any $n\in\mathbb{N}_0$, it follows that $\mu P_{\lambda}^n$ converges weakly to $\mu P_{\bar{\lambda}}^n$, as $\lambda\to\bar{\lambda}$. 
Finally, according to Lemma \ref{lem:rudin}, we get
\begin{align*}
\lim_{\lambda\to\bar{\lambda}}\left\langle f,\mu_{\lambda}^*\right\rangle
&=\lim_{\lambda\to\bar{\lambda}}\left(\lim_{n\to\infty}\left\langle f,\mu P_{\lambda}^n\right\rangle\right)
=\lim_{n\to\infty}\left(\lim_{\lambda\to\bar{\lambda}}\left\langle f,\mu P_{\lambda}^n\right\rangle\right)
=\lim_{n\to\infty}\left\langle f,\mu P_{\bar{\lambda}}^n\right\rangle 
=\left\langle f,\mu_{\bar{\lambda}}^*\right\rangle
\end{align*}
for any $f\in BC(X)$ and any $\mu\in\mathcal{M}_1(X)$, which completes the proof.
\end{proof}

In the final part of the paper we will study the properties of the semigroup of Markov operators $(P_{\lambda}(t))_{t\in\mathbb{R}_+}$, defined by \eqref{def:P(t)}. I order to apply the relevant results of \cite{dawid}, in what follows, we additionally assume that the measure $\vartheta$, given on the set $\Theta$, is finite. Then, according to \cite[Theorem 4.4]{dawid}, for any $\lambda>0$, there is a one-to-one correspondence between invariant measures of the operator $P_\lambda$ and those of the semigroup $(P_{\lambda}(t))_{t\in\mathbb{R}_+}$. Furthermore, 
if $\mu^*_{\lambda}\in\mathcal{M}_1(X)$ is a unique invariant measure of $P_{\lambda}$, then $\nu^*_{\lambda}:=\mu_{\lambda}^*G_{\lambda}\in\mathcal{M}_1(X)$, where
\begin{align*}
\mu G_{\lambda}(A)=\int_X\int_0^{\infty}\lambda e^{-\lambda t}\mathbbm{1}_A\left(S(t,x)\right)\,dt\,\mu(dx)\;\;\;\text{for any}\;\;\;\mu\in\mathcal{M}_1(X),\;\;\;A\in\mathcal{B}_X,
\end{align*}
is a unique invariant measure of $(P_{\lambda}(t))_{t\in\mathbb{R}_+}$. 

The main result concerning the continuous-time model, which is formulated and proven below, ensures the continuity of the map $\lambda\mapsto\nu^*_{\lambda}$.

\begin{theorem}\label{thm:main}
Suppose that conditions \eqref{A1}-\eqref{A5} hold with constants satisfying \eqref{constants_ass}, and, for any $\lambda \in [\lambda_{min}, \lambda_{max}]$, let $\mu_{\lambda}^*$ stand for the unique invariant probability measure measure of $P_{\lambda}$. Further, assume that $\Theta$ is endowed with a finite Borel measure $\vartheta$. Then, for every $\bar{\lambda}\in[\lambda_{\min},\lambda_{\max}]$, we have
$\nu_{\lambda}^* \stackrel{w}{\to} \nu_{\bar{\lambda}}^*$, as $\lambda\to\bar{\lambda}$.
\end{theorem}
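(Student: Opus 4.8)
The plan is to exploit the factorization $\nu_{\lambda}^*=\mu_{\lambda}^*G_{\lambda}$ recalled just before the statement, together with the continuity of $\lambda\mapsto\mu_{\lambda}^*$ already granted by Theorem \ref{thm:mu}. Since both $\nu_{\lambda}^*$ and $\nu_{\bar{\lambda}}^*$ are probability measures, weak convergence is equivalent to convergence in the Fortet-Mourier norm, so it suffices to show that $\|\mu_{\lambda}^*G_{\lambda}-\mu_{\bar{\lambda}}^*G_{\bar{\lambda}}\|_{FM}\to 0$ as $\lambda\to\bar{\lambda}$. My strategy is to establish that the map $(\lambda,\mu)\mapsto\mu G_{\lambda}$ is jointly continuous on $(\bar{\alpha},\infty)\times\mathcal{M}_{sig}(X)$, in exact analogy with Lemma \ref{lem:jointly_cnt}, and then to compose this with the convergence $\mu_{\lambda}^*\to\mu_{\bar{\lambda}}^*$ furnished by Theorem \ref{thm:mu}.

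To set up the analogue I would introduce the stochastic kernel $G_{(t)}(x,A):=\mathbbm{1}_A(S(t,x))$, whose associated Markov operator sends $\mu$ to its pushforward under $S(t,\cdot)$, so that $\mu G_{\lambda}=\int_0^{\infty}\lambda e^{-\lambda t}\mu G_{(t)}\,dt$. First I would record the Bochner representation, the counterpart of Lemma \ref{lem:Bochner}; here no moment condition is needed, because $t\mapsto\langle f,\mu G_{(t)}\rangle=\int_X f(S(t,x))\,\mu(dx)$ is continuous for every $f\in BL(X)$ directly by continuity of $S(\cdot,x)$ and dominated convergence, whence strong measurability follows from the Pettis theorem, and the bound $\|e^{-\lambda t}\mu G_{(t)}\|_{TV}\leq e^{-\lambda t}\|\mu\|_{TV}$ makes Theorem \ref{thm:b_int} applicable. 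Next I would prove the counterpart of Lemma \ref{lem:equicnt}: for $\|f\|_{BL}\leq 1$ one has $\|G_{(t)}f\|_{\infty}\leq 1$ and, using \eqref{A2}, $|G_{(t)}f(x)-G_{(t)}f(y)|=|f(S(t,x))-f(S(t,y))|\leq Le^{\alpha t}\|x-y\|$, so that $\|\mu G_{(t)}\|_{FM}\leq\max\{1,Le^{\alpha t}\}\|\mu\|_{FM}$.

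With these two ingredients I would repeat the computation from the proof of Lemma \ref{lem:jointly_cnt} essentially verbatim, splitting
\begin{align*}
\mu_1 G_{\lambda_1}-\mu_2 G_{\lambda_2}=\int_0^{\infty}\left(\lambda_1 e^{-\lambda_1 t}-\lambda_2 e^{-\lambda_2 t}\right)\mu_1 G_{(t)}\,dt+\int_0^{\infty}\lambda_2 e^{-\lambda_2 t}\left(\mu_1-\mu_2\right)G_{(t)}\,dt,
\end{align*}
then applying Lemma \ref{lem:mvt} to the first integral and the Fortet-Mourier bound above (together with $\lambda_2>\alpha$) to the second, to arrive at
\begin{align*}
\left\|\mu_1 G_{\lambda_1}-\mu_2 G_{\lambda_2}\right\|_{FM}\leq\|\mu_1\|_{TV}\,|\lambda_1-\lambda_2|\left(\frac{1}{\lambda_1}+\frac{1}{\lambda_2}\right)+\|\mu_1-\mu_2\|_{FM}\left(1+\frac{L\lambda_2}{\lambda_2-\alpha}\right).
\end{align*}

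Finally, evaluating this inequality at $(\lambda_1,\mu_1)=(\lambda,\mu_{\lambda}^*)$ and $(\lambda_2,\mu_2)=(\bar{\lambda},\mu_{\bar{\lambda}}^*)$ and using $\|\mu_{\lambda}^*\|_{TV}=1$, both terms tend to $0$ as $\lambda\to\bar{\lambda}$: the first because $|\lambda-\bar{\lambda}|\to 0$, and the second because Theorem \ref{thm:mu} gives $\|\mu_{\lambda}^*-\mu_{\bar{\lambda}}^*\|_{FM}\to 0$ while the prefactor stays bounded near $\bar{\lambda}$. This yields $\|\nu_{\lambda}^*-\nu_{\bar{\lambda}}^*\|_{FM}\to 0$, i.e. $\nu_{\lambda}^*\stackrel{w}{\to}\nu_{\bar{\lambda}}^*$. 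The only genuine work is re-deriving the two $G_{(t)}$-analogues of Lemmas \ref{lem:Bochner} and \ref{lem:equicnt}; once the joint continuity of $G_{\lambda}$ is in hand the conclusion is immediate by composition with Theorem \ref{thm:mu}, so I do not anticipate any serious obstacle beyond bookkeeping for the domination and the requirement $\lambda>\bar{\alpha}$.
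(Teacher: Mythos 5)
Your proposal is correct and takes essentially the same route as the paper's proof: the identical two-term split (kernel difference $|\lambda e^{-\lambda t}-\bar{\lambda}e^{-\bar{\lambda}t}|$ handled by Lemma \ref{lem:mvt}, measure difference handled via $\|f\circ S(t,\cdot)\|_{BL}\leq 1+Le^{\alpha t}$ from \eqref{A2}), leading to the same estimate $\|\nu_{\lambda}^*-\nu_{\bar{\lambda}}^*\|_{FM}\leq |\lambda-\bar{\lambda}|\left(\frac{1}{\lambda}+\frac{1}{\bar{\lambda}}\right)+\left(1+\frac{L\bar{\lambda}}{\bar{\lambda}-\alpha}\right)\|\mu_{\lambda}^*-\mu_{\bar{\lambda}}^*\|_{FM}$ and the same final appeal to Theorem \ref{thm:mu}. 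The only (harmless) difference is presentational: the paper derives this bound directly by pairing with test functions $f\circ S(t,\cdot)$ for $\|f\|_{BL}\leq 1$ and working with scalar integrals, so your $G_{(t)}$-analogues of Lemmas \ref{lem:Bochner} and \ref{lem:equicnt} and the Bochner-integral scaffolding, while valid, are not needed.
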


\begin{proof}
Let $\bar{\lambda}\in[\lambda_{\min},\lambda_{\max}]$, and let $f\in BL(X)$ be such that $\|f\|_{BL}\leq 1$. For any \linebreak$\lambda\in[\lambda_{\min},\lambda_{\max}]$, we have
\begin{align*}
\left\langle f,\nu_{\lambda}^*\right\rangle=\left\langle f,\mu_{\lambda}^*G_{\lambda}\right\rangle
=\int_X\int_0^{\infty}\lambda e^{-\lambda t}f\left(S(t,x)\right)\,dt\,\mu_{\lambda}^*(dx),
\end{align*}
whence 
\begin{align*}
\begin{aligned}
\left|\left\langle f,\nu_{\lambda}^*-\nu^*_{\bar{\lambda}}\right\rangle\right|
\leq &\int_0^{\infty}\left|\lambda e^{-\lambda t}-\bar{\lambda}e^{-\bar{\lambda}t}\right|\,dt
+
\left|\int_0^{\infty}\bar{\lambda}e^{-\bar{\lambda}t}\left\langle f\circ S(t,\cdot),\mu^*_{\lambda}-\mu^*_{\bar{\lambda}}\right\rangle\,dt\right|.
\end{aligned}
\end{align*}
Note that, due to \eqref{A2}, $f\circ S(t,\cdot)\in BL(X)$ and $\|f\circ S(t,\cdot)\|_{BL}\leq 1+Le^{\alpha t}$, which implies that 
\begin{align*}
\begin{aligned}
\left|\int_0^{\infty}\bar{\lambda}e^{-\bar{\lambda}t}\left\langle f\circ S(t,\cdot),\mu^*_{\lambda}-\mu^*_{\bar{\lambda}}\right\rangle\,dt\right|
&\leq \left\|\mu_{\lambda}^*-\mu_{\bar{\lambda}}^*\right\|_{FM}\int_0^{\infty} \bar{\lambda} e^{-\overline{\lambda} t}\left(1+Le^{\alpha t}\right) \,dt \\
&= \left\|\mu_{\lambda}^*-\mu_{\bar{\lambda}}^*\right\|_{FM}\left(
 1+\frac{L\bar{\lambda}}{\bar{\lambda}-\alpha}\right).
\end{aligned}
\end{align*}
Combining this and Lemma \ref{lem:mvt}, finally gives
\begin{align*}
\left\|\nu_{\lambda}^*-\nu_{\bar{\lambda}}^*\right\|_{FM} 
\leq \left|\lambda-\bar{\lambda}\right|
\left(\frac{1}{\lambda}+\frac{1}{\bar{\lambda}}\right)+c \left\|\mu_{\lambda}^*-\mu_{\bar{\lambda}}^*\right\|_{FM}
\end{align*}
with $c:=1+L\bar{\lambda}(\bar{\lambda}-\alpha)^{-1}$.  Hence, referring to the assertion of Theorem \ref{thm:mu}, we obtain 
\begin{align*}
\lim_{\lambda\to\bar{\lambda}}\left\|\nu_{\lambda}^*-\nu_{\bar{\lambda}}^*\right\|_{FM}=0,
\end{align*}
and the proof is completed.
\end{proof}

\section{Acknowledgements}
The work of Hanna Wojew\'odka-\'Sci\k{a}\.zko has been supported by the National Science Centre of Poland, grant number 2018/02/X/ST1/01518.

\bibliography{references}

\begin{thebibliography}{10}

\bibitem{alkurdi}
T.~Alkurdi, S.C. Hille, and O.~Van~Gaans.
\newblock Persistence of stability for equilibria of map iterations in banach
  spaces under small perturbations.
\newblock {\em Potential Anal.}, 42(11):175--201, 2015.

\bibitem{bogachev}
V.I. Bogachev.
\newblock {\em Measure Theory}.
\newblock Springer-Verlag, Berlin, 2007.

\bibitem{costa}
O.L.V. Costa and F.~Dufour.
\newblock Stability and ergodicity of piecewise deterministic \text{Markov}
  processes.
\newblock {\em SIAM J.~Control Optim.}, 47(2):1053--1077, 2008.

\bibitem{cl}
M.~Crandall and T.~Ligget.
\newblock Generation of semigroups of nonlinear transformations on general
  \text{B}anach spaces.
\newblock {\em Amer. J. Math.}, 93:265--298, 1971.

\bibitem{clt_chw}
D.~Czapla, K.~Horbacz, and H.~Wojew\'odka.
\newblock A useful version of the central limit theorem for a general class of
  {M}arkov chains.
\newblock {\em arXiv:1804.09220v2}, 2018.

\bibitem{dawid}
D.~Czapla, K.~Horbacz, and H.~Wojew\'odka.
\newblock Ergodic properties of some piecewise-deterministic {M}arkov process
  with application to gene expression modelling.
\newblock {\em To appear in Stochastic Process. Appl., doi:
  10.1016/j.spa.2019.08.006}, 2019.

\bibitem{lil_chw}
D.~Czapla, K.~Horbacz, and H.~Wojew\'odka-\'Sci\k{a}\.zko.
\newblock The {S}trassen invariance principle for certain non-stationary
  {M}arkov-{F}eller chains.
\newblock {\em arXiv:1810.07300v2}, 2018.

\bibitem{asia}
D.~Czapla and J.~Kubieniec.
\newblock Exponential ergodicity of some {M}arkov dynamical systems with
  application to a {P}oisson driven stochastic differential equation.
\newblock {\em Dyn. Syst.}, 34(1):130--156, 2019.

\bibitem{davis}
M.H.A. Davis.
\newblock Piecewise-deterministic \text{Markov} processes: a general class of
  non-diffusion stochastic models.
\newblock {\em J.~Roy. Statist. Soc. Ser. B}, 46(3):353--388, 1984.

\bibitem{vector_measures}
J.~Diestel and Jr. Uhl, J.J.
\newblock {\em Vector measures}.
\newblock American Mathematical Society, Providence, R.I., 1977.

\bibitem{dudley_baire}
R.M. Dudley.
\newblock Convergence of baire measures.
\newblock {\em Stud. Math.}, 27:251--268, 1966.

\bibitem{costa2000}
F.~Dufour and O.L.V. Costa.
\newblock Stability of piecewise-deterministic \text{Markov} processes.
\newblock {\em SIAM J.~Control Optim.}, 37(5):1483--1502, 2000.

\bibitem{hille_evers}
J.~Evers, S.C. Hille, and A.~Muntean.
\newblock Mild solutions to a measure-valued mass evolution problem with flux
  boundary conditions.
\newblock {\em J. Diff. Equ.}, 259:1068--1097, 2015.

\bibitem{hille_lyczek}
P.~Gwiazda, S.C. Hille, K.~\L{}yczek, and A.~\'Swierczewska-Gwiazda.
\newblock Differentiability in perturbation parameter of measure solutions to
  perturbed transport equation.
\newblock {\em To appear in Kinet. Relat. Mod.}, currenlty in
  ArXiv:18l06.00357, 2019.

\bibitem{hhs}
S.C. Hille, K.~Horbacz, and T.~Szarek.
\newblock Existence of a unique invariant measure for a~class of equicontinuous
  {M}arkov operators with application to a stochastic model for
  an~autoregulated gene.
\newblock {\em Ann. Math. Blaise Pascal}, 23(2):171--217, 2016.

\bibitem{hhsw2}
S.C. Hille, K.~Horbacz, T.~Szarek, and H.~Wojew\'{o}dka.
\newblock Law of the iterated logarithm for some {Markov} operators.
\newblock {\em Asymptotic Anal.}, 97(1-2):91--112, 2016.

\bibitem{hhsw}
S.C. Hille, K.~Horbacz, T.~Szarek, and H.~Wojew\'{o}dka.
\newblock Limit theorems for some \text{M}arkov chains.
\newblock {\em J. Math. Anal. Appl.}, 443(1):385--408, 2016.

\bibitem{klo}
T.~Komorowski, C.~Landim, and S.~Olla.
\newblock {\em Fluctuations in \text{M}arkov processes. Time symmetry and
  martingale approximation}.
\newblock Springer-Verlag, Heidelberg, 2012.

\bibitem{lm}
A.~Lasota and M.C. Mackey.
\newblock Cell division and the stability of cellular populations.
\newblock {\em J.~Math. Biol.}, 38:241--261, 1999.

\bibitem{tyran}
M.C. Mackey, M.~Tyran-Kami\'{n}ska, and R.~Yvinec.
\newblock Dynamic behaviour of stochastic gene expression models in the
  presence of bursting.
\newblock {\em {SIAM} J. Appl. Math.}, 73(5):1830--1852, 2013.

\bibitem{riedler_ea}
M.G. Riedler, M.~Thieullen, and G.~Wainrib.
\newblock Limit theorems for infinite-dimensional piecewise deterministic
  \text{M}arkov processes. applications to stochastic excitable membrane
  models.
\newblock {\em Electron. J. Probab.}, 17(55):1--48, 2012.

\bibitem{rudin}
W.~Rudin.
\newblock {\em Principles of mathematical analysis}.
\newblock McGraw-Hill, Inc., New York, 1976.

\bibitem{weaver}
N.~Weaver.
\newblock {\em Lipschitz Algebras}.
\newblock World Scientific Publishing Co. Pte Ltd., Singapore, 1999.

\bibitem{hw}
H.~Wojew\'odka.
\newblock Exponential rate of convergence for some \text{Markov} operators.
\newblock {\em Statist. Probab. Lett.}, 83(10):2337--2347, 2013.

\bibitem{worm}
D.T.H. Worm.
\newblock Semigroups on spaces of measures.
\newblock {\em PhD. thesis, Leiden University, The Netherlands}, Available at:
  www.math.leidenuniv.nl/nl/theses/PhD/, 2010.

\end{thebibliography}
\bibliographystyle{plain}
\end{document}